\theoremstyle{remark}
\newtheorem*{pf}{Proof}
\newtheorem*{pfam}{Proof of Theorem \ref{thm:am}}
\newtheorem*{pfctm}{Proof of Theorem \ref{thm:ctm}}
\newtheorem*{pfsp}{Proof of Theorem \ref{thm:specified}}
\theoremstyle{plain}
\newtheorem{THM}{Theorem}
\newtheorem{lemma}{Lemma}[section]
\newtheorem{prop}[lemma]{Proposition}
\newtheorem{thm}{Theorem}[section]
\newtheorem*{claim}{Claim}
\newtheorem*{rmk}{Remark}
\newtheorem{cor}[lemma]{Corollary}
\numberwithin{equation}{section}
\newcommand{\og}{\overline{g}}
\newcommand{\op}{\overline{\pi}}
\newcommand{\hg}{\hat{g}}
\newcommand{\hp}{\hat{\pi}}
\numberwithin{equation}{section}
\newcommand{\R}{\mathbb{R}}
\renewcommand{\div}{\textup{div}}
\newcommand{\bg}{\bar{g}}
\newcommand{\bp}{\bar{\pi}}
\renewcommand{\P}{{\bf P}}
\newcommand{\C}{{\bf C}}
\newcommand{\J}{{\bf J}}
\begin{document}
\date{}
\title[Specifying angular momentum and center of mass]
{Specifying angular momentum and center of mass for vacuum initial data sets}
\author{Lan--Hsuan Huang}
\address{Department of Mathematics \\
                 Columbia University\\
                 New York, NY 10027}
\email{lhhuang@math.columbia.edu}
\author{Richard Schoen}
\address{Department of Mathematics \\
                 Stanford University \\
                 Stanford, CA 94305}
\email{schoen@math.stanford.edu}
\author{Mu--Tao Wang}
\address{Department of Mathematics \\
                 Columbia University\\
                 New York, NY 10027}
\email{mtwang@math.columbia.edu}

\thanks{The first author is supported by NSF grant DMS-1005560, the second author is supported by NSF grant DMS-0604960, and the third author is supported by NSF grant DMS-0904281.}
\begin{abstract}
We show that it is possible to perturb arbitrary vacuum asymptotically flat spacetimes to new ones having exactly the same energy and linear momentum, but with center of mass and angular momentum equal to any preassigned values measured with respect to a fixed affine frame at infinity. This is in contrast to the axisymmetric situation where a bound on the angular momentum by the mass has been shown to hold for black hole solutions. Our construction involves changing the solution at the linear level in a shell near infinity, and perturbing to impose the vacuum constraint equations. The procedure involves the perturbation
correction of an approximate solution which is given explicitly.
\end{abstract}

\maketitle

\section{Introduction}
For asymptotically flat spacetimes with appropriate asymptotics there are several conserved quantities which can be measured at spatial infinity. These include the total energy and linear momentum, as well as the angular momentum and center of mass. When we fix an affine frame at infinity the linear and angular momentum as well as the center of mass become three vectors. It is natural to ask whether there are any constraints on these quantities imposed by the Einstein equations. The positive mass theorem provides one such constraint, namely that the energy-momentum vector is a forward pointing timelike vector. In particular this says that the magnitude of the linear momentum vector is bounded above by the energy. For the Kerr solutions which describe rotating stationary axisymmetric vacuum black holes, it is true that the angular momentum must satisfy such a bound. It has been shown over the past several years by S. Dain \cite{D}  and P. T. Chru\'{s}ciel et al. \cite{Ch1,Ch2,Ch3} that such an inequality is also satisfied by general axisymmetric black hole solutions of the Einstein equations. The paper by X. Zhang \cite{Z} proves such an inequality under an energy condition involving his definition of angular momentum density, but it appears that general vacuum data sets do not satisfy this energy condition.

The main results of this paper show that there are no constraints on the angular momentum
and center of mass in terms of the energy-momentum vector for general vacuum solutions
of the Einstein equations. Precisely we fix an affine frame at infinity and we give an effective procedure for adding a specified amount of angular momentum to a solution of the vacuum Einstein equations, producing a new solution with specified angular momentum but with only slightly perturbed energy-momentum vector. We obtain a similar result for the center of mass. Then, by considering a family of initial data near the given one, and by doing the construction continuously, we obtain a perturbation with arbitrarily specified angular momentum and center of mass, while leaving the energy-momentum vector unchanged. One may think of these results  as the pure gravity analogue of the addition to a Newtonian system of a very small symmetrically placed mass far from an axis whose rotation imposes a fixed amount of angular momentum. Similarly one can think of adding a small mass to a Newtonian system which when translated far from the center of mass of the original system produces a fixed change in the center of mass of the new system. From the point of view of the dynamics of the vacuum Einstein
equations we expect the angular momentum that we add near spatial infinity to be
radiated away and to have little effect on the final stationary state of the system. We emphasize that our solutions with arbitrarily specified angular momentum and center mass are complete manifolds, and we can arbitrarily specify the angular momentum of the perturbed data while keeping the energy-momentum and center of mass fixed. Without the completeness condition, there are exterior vacuum solutions with arbitrary prescribed energy-momentum, angular momentum, and center of mass, such as the example of a boosted slice in an exterior Kerr solution computed by Chru\'{s}ciel--Delay \cite{CD}. Also, certain $N$-body solutions constructed by Chru\'{s}ciel--Corvino--Isenberg \cite{CCI} should have large angular momentum, but this comes from orbital angular momentum, i.e. $\vec{c}\times \vec{p}$ for large $\vec{c}$. In particular, the center of mass is not fixed in their case.

From a technical point of view the reason it is possible to make these constructions is that
the angular momentum and center of mass are determined by terms in the expansion of the
solution which are of lower order than those which determine the energy and linear momentum.
The idea then is to make perturbations near infinity which affect only the lower order terms
in the expansion. We do this by explicitly constructing linear perturbations supported in a shell
near infinity which impose the required change in angular momentum (or center of mass),
and then by finding a solution of the vacuum constraint equations which is sufficiently
close to the perturbed system so that the change in angular momentum (or center of mass)
persists. This can be done in such a way that the energy and linear momentum are changed by an arbitrarily small amount.

Let $(M, g,\pi)$ be asymptotically flat in the sense that, outside a compact set, there exists an asymptotically flat coordinate system $\{x^i\}$ so that
\begin{align*}
	 g_{ij} (x) &= \delta_{ij} + O(|x|^{-1})& \pi_{ij}(x) &= O(|x|^{-2}), \\
	 \partial^k g_{ij}(x) &= O(|x|^{-1-k}) \; \; \mbox{ for } k =1,2 & \partial \pi_{ij} (x) &= O(|x|^{-3}).
\end{align*}
In addition, we assume that $(M, g,\pi)$ satisfies the Regge--Teitelboim condition
\begin{align*}
	 g_{ij}(x) - g_{ij}(-x) &= O(|x|^{-2}) & \pi_{ij}(x) + \pi_{ij} (-x) &= O(|x|^{-3}),\\
	 \partial^k (g_{ij}(x) - g_{ij}(-x)) &= O(|x|^{-2-k}) \; \; \mbox{ for } k =1,2 & \partial ( \pi_{ij}(x) + \pi_{ij} (-x) ) &= O(|x|^{-4}).
\end{align*}
The notation $f = O(|x|^{-a})$ means that $|f| \le C |x|^{-a}$ for a constant $C$. We remark that our construction works for data $(g,\pi)$ with weaker assumptions on the decay rates. For simplicity of notation, we assume the decay rates above and do not consider here the question of optimal decay conditions.

Let $E, {\bf C}, {\bf P}, {\bf J}$ denote the energy, center of mass, linear momentum, and angular momentum of $(g,\pi)$. They are defined as limits of integrals over Euclidean spheres
\begin{align*}
	E &= \frac{1}{16 \pi} \lim_{\rho \rightarrow \infty} \int_{|x| = \rho}\sum_{i,j} (g_{ij,i} - g_{ii,j} )\frac{x^j }{|x|} \, d\sigma_0,\\
	{\bf C}^p &= \frac{1}{16 \pi E} \lim_{\rho\rightarrow \infty}\int_{|x|=\rho}\left[ x^p \sum_{i,j}(g_{ij,i}  - g_{ii,j}) \frac{x^j}{|x|} - \sum_i (g_{ip} \frac{x^i}{|x|} - g_{ii} \frac{x^p}{|x|})\right]\, d\sigma_0,
\end{align*}
\begin{align*}
	{\bf P}_i & = \frac{1}{ 8\pi} \lim_{\rho \rightarrow \infty} \int_{|x| = \rho} \sum_j \pi_{ij} \frac{x^j}{|x|} \, d\sigma_0,\\
	{\bf J}_i & = \frac{1}{ 8 \pi E} \lim_{\rho\rightarrow \infty} \int_{|x| = \rho} \sum_{j,k} \pi_{jk} Y_{i}^j \frac{x^k}{|x|} \, d\sigma_0,
\end{align*}
where $d \sigma_0$ is the area measure of the Euclidean sphere $\{|x|=\rho\}$ and $Y_i = \frac{\partial}{\partial x^i} \times \vec{x}$ (cross product) for $i= 1, 2, 3$ are the rotation vector fields. Denote by $\overline{E}, \overline{{\bf C}}, \overline{{\bf P}}, \overline{{\bf J}}$ the energy, center of mass, linear momentum, and angular momentum of $(\og,\op)$.

We now give precise statements of the main theorems, where the definition of the weighted Sobolev spaces $W^{k,p}_{-q}$ is provided in Section \ref{sec:am}, and we assume $p>3$ and $q \in (1/2, 1)$. In our construction, we fix an affine frame near infinity and measure all asymptotic quantities relative to this frame; in fact, we may fix an asymptotically flat coordinate system throughout.

\begin{THM}\label{thm:am}
Let $(g, \pi)$ be a nontrivial vacuum asymptotically flat initial data set with $g = v^4 \delta$ outside a compact set and $\pi(x) + \pi(-x) = O(|x|^{-1-2q})$. Given $\vec{\alpha}\in\mathbb{R}^3$ and $\epsilon >0$, there exists a vacuum asymptotically flat initial data set $(\og, \op)$ such that $(\og, \op)$ is within the $\epsilon$-neighborhood of $(g,\pi)$ in $W^{2,p}_{-q}\times W^{1,p}_{-1-q}$ and
\[
	| \overline{E}- E | \le \epsilon, \quad |\overline{{\bf C}} - {\bf C}| \le \epsilon, \quad \quad |\overline{{\bf P}}-{\bf P}|\le \epsilon,
\]
and
\begin{align}
	|\overline{{\bf J}} -{\bf J}- \vec{\alpha} | \le \epsilon. \label{eq:amineq}
\end{align}
\end{THM}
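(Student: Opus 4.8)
The strategy is to add angular momentum by an explicit linear perturbation of $\pi$ supported in a far-away shell, and then to correct this approximate solution to an exact vacuum solution without destroying the change in $\mathbf{J}$. First I would fix a large radius $R$ and work in the shell $A_R = \{R \le |x| \le 2R\}$. Recall that $\mathbf{J}_i$ is read off from the $|x|^{-2}$ part of $\pi$ paired against the rotation fields $Y_i$; so the natural ansatz is to look for a symmetric trace-free (with respect to the flat metric) tensor $h_{ij}$ supported in $A_R$, of size $O(|x|^{-2})$, which is divergence-free to leading order and whose flux integrals $\int_{|x|=\rho}\sum_{j,k} h_{jk}\, Y_i^j\, \frac{x^k}{|x|}\, d\sigma_0$ pick up a prescribed vector. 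A convenient explicit choice is $h_{ij} = \partial_i \partial_j f - \delta_{ij}\Delta f$ (or a similar second-order expression) built from a function $f$ that is a cutoff times a linear combination of the appropriate harmonics $|x|^{-1}$ multiplied by linear functions, times a logarithmic or radial cutoff so that $h$ lives in the shell; one tunes the coefficients so that the flux at the outer boundary $|x|=2R$ realizes $E\,\vec{\alpha}$ while the flux at $|x|=R$ vanishes (so the data is unchanged inside $|x|\le R$). I would also arrange, using the Regge--Teitelboim decay of $\pi$ and choosing $h$ with the matching parity, that the energy $E$, linear momentum $\mathbf{P}$, and center of mass $\mathbf{C}$ are affected only by terms that are $O(R^{-q})$, hence $\le \epsilon$ for $R$ large; the parity condition $\pi(x)+\pi(-x) = O(|x|^{-1-2q})$ in the hypothesis is exactly what makes the center-of-mass integral converge and stay nearly unchanged.

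The second step is the perturbation to impose the vacuum constraint equations. Set $\tilde\pi = \pi + th$ with $t$ a real parameter, keep $g$ (which is $v^4\delta$ outside a compact set) for the moment, and measure the constraint error $\Phi(g,\tilde\pi) := (H(g,\tilde\pi), \mathrm{div}_g\,\tilde\pi)$. Since $(g,\pi)$ is vacuum and $h$ is supported in the shell with $\|h\|_{W^{1,p}_{-1-q}} = O(R^{-q'})$ small, $\Phi(g,\tilde\pi)$ is small in $W^{0,p}_{-2-q}$ (or the relevant weighted space). I would then invoke the standard machinery, following Corvino--Schoen and Chru\'sciel--Delay, that the linearized constraint operator $D\Phi_{(g,\pi)}$ is surjective with a bounded right inverse on asymptotically flat data when one allows corrections to $(g,\pi)$ in $W^{2,p}_{-q}\times W^{1,p}_{-1-q}$ (nontriviality of $(g,\pi)$ guarantees no obstructing Killing initial data). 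An implicit function theorem / iteration argument then produces a correction $(\delta g,\delta\pi)$ with $\|(\delta g,\delta\pi)\|_{W^{2,p}_{-q}\times W^{1,p}_{-1-q}} \le C\,\|\Phi(g,\tilde\pi)\|$, so that $(\og,\op) := (g+\delta g,\ \tilde\pi + \delta\pi)$ is exactly vacuum and within $\epsilon$ of $(g,\pi)$.

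The third step is to check that the correction $(\delta g, \delta\pi)$ does not spoil the bookkeeping. Because $\delta g \in W^{2,p}_{-q}$ and $\delta\pi \in W^{1,p}_{-1-q}$ with $q\in(1/2,1)$ and $p>3$, the correction decays strictly faster than the terms that define $\mathbf{J}$ and $\mathbf{C}$, so its contribution to all four asymptotic charges is controlled by the (small) Sobolev norm of the correction; in particular $|\overline E - E|,\ |\overline{\mathbf P}-\mathbf P|,\ |\overline{\mathbf C}-\mathbf C|$ are all $\le \epsilon$ after possibly enlarging $R$, and the change in $\mathbf{J}$ is $E\vec\alpha/E + O(\epsilon) = \vec\alpha + O(\epsilon)$, giving \eqref{eq:amineq}. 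Here one must be slightly careful that the surface integrals defining $\mathbf{J}$ and $\mathbf{C}$ actually converge for the perturbed data — this uses that the correction, being in the weighted Sobolev space with $q>1/2$, together with the preserved Regge--Teitelboim-type parity of the explicit perturbation $h$, keeps the integrands integrable.

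**Main obstacle.** I expect the real work to be in the first step: producing an explicit shell-supported tensor $h$ that simultaneously (i) carries exactly the desired angular-momentum flux $E\vec\alpha$ at the outer sphere, (ii) has vanishing flux (and vanishing data) at the inner sphere so the compact region and the original data there are untouched, (iii) is divergence-free enough that the constraint error it creates is of lower order than its own size, and (iv) has the parity needed so that $E$, $\mathbf{P}$, and especially the delicate center-of-mass integral $\mathbf{C}$ move by at most $\epsilon$. Balancing (i)--(iv) forces a careful choice of the radial cutoff profile and of the angular harmonics, and verifying that the induced constraint violation is genuinely subleading (so that the right-inverse correction is $o(R^{-q})$ and cannot eat the $\vec\alpha$ we inserted) is the crux; the analytic step with the right inverse of $D\Phi$ is by now standard and I would cite it rather than reprove it.
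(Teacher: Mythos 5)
Your overall architecture (explicit linear perturbation in a far shell, then a Corvino--Schoen type correction to restore the vacuum constraints, then bookkeeping of the charges) matches the paper's, but the mechanism you propose for actually changing $\mathbf{J}$ does not work, and this is the heart of the matter. You want a tensor $h$ supported in the shell $A_R$, divergence-free (at least to leading order), added to $\pi$ alone, whose flux $\int_{|x|=\rho}h_{jk}Y_i^j\,x^k/|x|\,d\sigma_0$ "realizes $E\vec\alpha$ at the outer sphere and vanishes at the inner sphere." This is impossible: for a symmetric tensor with $\sum_i h_{ij,i}=0$ and a Euclidean Killing field $Y$, one has $\partial_j(h_{ij}Y^i)=h_{ij}Y^i_{,j}=0$ (symmetric paired with antisymmetric), so the flux is the same on every sphere; if $h$ vanishes inside $|x|\le R$ the outer flux is forced to be zero. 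More fundamentally, since $h$ is compactly supported, the surface integrals at radius $\rho>2R$ defining $\mathbf{J}$ never see $h$ at all, so a compactly supported linear perturbation of $\pi$ contributes \emph{exactly zero} to the angular momentum as defined by the limit at infinity. (Dropping exact divergence-freeness does not help: the resulting constraint violation is then of the \emph{same} order as $h$, so the correction from the right inverse is not subleading and can cancel whatever you inserted.)

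The paper's key idea, which is absent from your proposal, is that the change in $\mathbf{J}$ is a \emph{bilinear} effect: one perturbs \emph{both} the metric, by $\sigma^k=k^{-1}\sigma(x/k)$, and the momentum, by $\tau^k=k^{-2}\tau(x/k)$, with $(\sigma,\tau)$ solving the linearized constraints in $A_1$. Converting the surface integral at infinity into a bulk integral plus a surface integral at a fixed radius $\rho_0$, the momentum constraint kills the linear terms and leaves
\[
\og^{lj}(\op_{il}Y^i)_{;j}=(\og^{lj}-\delta^{lj})\op_{il}Y^i_{,j}+\og^{lj}\op_{il}Y^s\overline{\Gamma}^i_{js},
\]
whose leading contribution is the product $\sigma^k\cdot\tau^k\cdot\partial Y$ plus $\tau^k\cdot\partial\sigma^k\cdot Y$. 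The scalings are chosen precisely so that this quadratic integral over $A_k$ is independent of $k$ and equals the moment $\int_{A_1}[\tfrac12\tau_{ij,l}Y^l+\tau_{il}Y^l_{,j}]\sigma^{ij}\,dx$, which Theorem \ref{thm:localpert} shows can be prescribed to be $-8\pi\vec\alpha$, while each of $\sigma^k,\tau^k$ separately tends to zero in $W^{2,p}_{-q}\times W^{1,p}_{-1-q}$. Without this bilinear moment condition your construction changes $\mathbf{J}$ by nothing at the linear level and by an uncontrolled amount at the nonlinear level. A secondary inaccuracy: the correction terms in $W^{2,p}_{-q}\times W^{1,p}_{-1-q}$ with $q<1$ decay like $r^{-q}$ and $r^{-1-q}$, which is \emph{slower} than the $r^{-1}$ and $r^{-2}$ terms defining $\mathbf{C}$ and $\mathbf{J}$; convergence and near-invariance of these charges require extracting the harmonic leading parts $A^k/r$, $B^k_i/r$ of the correction and exploiting parity, as the paper does, not a naive decay comparison.
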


For the center of mass we prove the following.
\begin{THM}\label{thm:ctm}
Let $(g, \pi)$ be a nontrivial vacuum asymptotically flat initial data set with $g = v^4 \delta$ outside a compact set and $\pi(x) + \pi(-x) = O(|x|^{-1-2q})$. Given $\vec{\gamma}\in\mathbb{R}^3$ and $\epsilon > 0$, there exists a vacuum asymptotically flat initial data set $(\og, \overline{\pi})$ such that $(\og, \op)$ is within the $\epsilon$-neighborhood of $(g,\pi)$ in $W^{2,p}_{-q}\times W^{1,p}_{-1-q}$ and
\[
	| \overline{E}- E| \le \epsilon, \quad |\overline{{\bf J}}-{\bf J}|\le\epsilon,\quad |\overline{{\bf P}}-{\bf P}|\le \epsilon
\]
and
\begin{align}
	|\overline{{\bf C}} - {\bf C} - \vec{\gamma} | \le \epsilon. \label{eq:ctmineq}
\end{align}
\end{THM}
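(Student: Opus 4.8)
The plan is to run the scheme used in the proof of Theorem~\ref{thm:am}, replacing the explicit perturbation of $\pi$ that injects angular momentum by an explicit perturbation of $g$ that injects center of mass. Fix a large radius $R$ and a cutoff $\chi_R$ that vanishes on $\{|x|\le R\}$ and equals $1$ on $\{|x|\ge 2R\}$, put $w_{\vec\gamma}(x)=\langle\vec\gamma,x\rangle/|x|^3=-\gamma^a\partial_a(1/|x|)$, and set $\hg=(v+c\,\chi_R\,w_{\vec\gamma})^4\delta$ outside the compact set, with $c$ a constant fixed below. Let $\hp$ agree with $\pi$ on $\{|x|\le R\}$ and be the natural conformal rescaling of $\pi$ on $\{|x|\ge 2R\}$, interpolated through the shell $A_R=\{R<|x|<2R\}$ (with, if needed, a trace/gauge adjustment), so chosen that $(\hg,\hp)$ is exactly vacuum on $\{|x|\le R\}$ and vacuum up to a fast-decaying error elsewhere. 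Because $w_{\vec\gamma}$ is odd and $O(|x|^{-2})$ and $q<1$, the pair $(\hg-g,\hp-\pi)$ has $W^{2,p}_{-q}\times W^{1,p}_{-1-q}$ norm tending to $0$ as $R\to\infty$, and it preserves the Regge--Teitelboim reflection symmetry.

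Two computations make this choice effective. First, since $w_{\vec\gamma}$ is harmonic and $O(|x|^{-2})$, it does not touch the $|x|^{-1}$ part of $g$, so the passage $(g,\pi)\to(\hg,\hp)$ changes neither $E$ nor ${\bf P}$ nor ${\bf J}$ --- the induced change in $\pi$ being only $O(|x|^{-4})$, too fast to affect ${\bf P}$ or ${\bf J}$ --- while a direct evaluation of the defining surface integral shows that adding $c\,w_{\vec\gamma}\,\delta_{ij}$ to $g_{ij}$ shifts ${\bf C}$ by a fixed nonzero multiple of $\vec\gamma$, independent of $R$ (the weight $x^p$ in the center-of-mass integrand exactly compensating the extra decay of $w_{\vec\gamma}$ relative to the $|x|^{-1}$ terms governing $E$); one then fixes $c$, a multiple of $E$, so this multiple is $1$. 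Second, the constraint error $\Phi(\hg,\hp)$ is concentrated in $A_R$ with size $O(|x|^{-4})$ there --- from $(\Delta\chi_R)w_{\vec\gamma}$, $\nabla\chi_R\cdot\nabla w_{\vec\gamma}$, and the analogous $\pi$ terms --- plus a remainder of order $|x|^{-5}$ or better arising from $R(g)\ne 0$; in either case $\|\Phi(\hg,\hp)\|_{W^{0,p}_{-2-q}}\to 0$ as $R\to\infty$.

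With this approximate solution I would invoke the correction step of the proof of Theorem~\ref{thm:am}: using the bounded right inverse of the linearized constraint operator on the weighted spaces, taken reflection-symmetric as there (available because $(g,\pi)$ is nontrivial, hence $E>0$), together with a contraction argument, one obtains an exact vacuum set $(\og,\op)$ with $\|(\og,\op)-(\hg,\hp)\|_{W^{2,p}_{-q}\times W^{1,p}_{-1-q}}\le C\,\|\Phi(\hg,\hp)\|_{W^{0,p}_{-2-q}}$. For $R$ large this places $(\og,\op)$ in the $\epsilon$-neighborhood of $(g,\pi)$, and it remains only to check that the correction perturbs $E,{\bf C},{\bf P},{\bf J}$ by at most $\epsilon$.

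This last point is the real obstacle, because a perturbation small in $W^{2,p}_{-q}\times W^{1,p}_{-1-q}$ need not be small in a topology controlling the charges, which read off the $|x|^{-1}$ and odd $|x|^{-2}$ coefficients of $g$ and the $|x|^{-2}$ and $|x|^{-3}$ coefficients of $\pi$. The point is that the correction is not generic: its source sits essentially in $A_R$ with magnitude $O(|x|^{-4})$, so the Newtonian-type potential representing it has $|x|^{-1}$ coefficient $O\big(\int_{A_R}|x|^{-4}\big)=O(R^{-1})$, and --- because the right inverse respects the reflection symmetry --- no odd $|x|^{-1}$ term (which would destroy finiteness of $\overline{{\bf C}}$) and only an $O(R^{-1})$ odd $|x|^{-2}$ term; the $\pi$-component is estimated the same way. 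Hence all four charges move by $O(R^{-1})$ under the correction, and combined with $\overline{{\bf C}}-{\bf C}=\vec\gamma+O(R^{-1})$ from the second step this gives $|\overline{{\bf C}}-{\bf C}-\vec\gamma|\le\epsilon$ together with the remaining estimates once $R$ is chosen large enough. As in Theorem~\ref{thm:am}, the delicate bookkeeping is precisely the reflection symmetry, and the fact that the center-of-mass integrand is more singular than the one for $E$, so one must verify that the correction's contribution to ${\bf C}$, not merely to $E$, is genuinely $o(1)$.
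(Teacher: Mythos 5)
Your construction is genuinely different from the paper's, and it has a gap that I believe is fatal rather than technical: the linear (dipole) mechanism you use to shift ${\bf C}$ is cancelled, at leading order, by the very correction that restores the vacuum constraints. Concretely, the shell source that must be solved away is $c\,\Delta(\chi_R w_{\vec{\gamma}})=c\,[(\Delta\chi_R)w_{\vec{\gamma}}+2\nabla\chi_R\cdot\nabla w_{\vec{\gamma}}]$ (plus genuinely small quadratic terms), of size $O(R^{-4})$ on $A_R$; its first moment is
\[
\int_{A_R}x^p\,\Delta(\chi_R w_{\vec{\gamma}})\,dx=\lim_{\rho\to\infty}\int_{|x|=\rho}\Big(x^p\,\partial_\nu w_{\vec{\gamma}}-w_{\vec{\gamma}}\,\nu^p\Big)\,d\sigma_0=-4\pi\gamma^p,
\]
a fixed nonzero multiple of $\gamma^p$ independent of $R$. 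So your estimate of the odd $|x|^{-2}$ coefficient of the correction is wrong: only the monopole moment of the source is $O(R^{-1})$, while the dipole moment is $\int_{A_R}|x|\cdot O(|x|^{-4})\,dx=O(1)$. In the model time-symmetric case this is transparent: if $v$ is harmonic outside the core, the unique decaying solution of $\Delta\phi=-c\,\Delta(\chi_R w_{\vec{\gamma}})$ is $\phi=-c\,\chi_R w_{\vec{\gamma}}$ itself, so the corrected conformal factor returns to $v$ up to a small adjustment supported in a fixed compact set, whose dipole moment is therefore small. More structurally, the divergence-theorem identity \eqref{eq:ctmog} shows that for an exact vacuum solution agreeing with $(g,\pi)$ inside $\{r\le\rho_0\}$, the change in $16\pi\overline{E}\,\overline{\C}^p$ is the change in $\int x^p$ against expressions purely quadratic in $(\og-\delta,\op)$; hence no perturbation supported near infinity can move $\C$ at linear order once the constraints are imposed exactly. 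The quadratic contribution of your dipole is only $O(R^{-1})$, so your $(\og,\op)$ in fact satisfies $|\overline{\C}-\C|\le\epsilon$ rather than $|\overline{\C}-\C-\vec{\gamma}|\le\epsilon$.

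This is precisely why the paper works at quadratic order: it takes $\sigma^k=k^{-1}\sigma(x/k)$ supported in $A_k$ with $L\sigma=0$ and $\div\sigma=0$ (Theorem \ref{thm:local_cm}), whose contribution to all charges vanishes at linear order, but whose quadratic constraint error has scale-invariant first moment, $\int_{A_k}x^p\,\tfrac14\sum(\sigma^k_{ij,l})^2\,dx=\int_{A_1}x^p\,\tfrac14\sum(\sigma_{ij,l})^2\,dx$; it is this moment --- equivalently, the $O(1)$ dipole of the source for the correction $u^k$ --- that produces the prescribed shift $\vec{\gamma}$. To rescue a dipole-type ansatz you would need the perturbation to be $O(1)$ near the core (essentially a translation of the data), which is no longer $\epsilon$-small in $W^{2,p}_{-q}$.
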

By combining these two results we can change both the center of mass and angular momentum so that they are arbitrarily close to specified values while leaving the energy and linear momentum essentially unchanged. The condition that $g = v^4 \delta$ can be removed by a density theorem. Moreover, given a vacuum initial data set, there is a small perturbation with arbitrary specified angular momentum and center of mass and with the \emph{same} mass and linear momentum.

\begin{THM} \label{thm:specified}
Let $(g,\pi)$ be a nontrivial vacuum initial data set satisfying the Regge--Teitelboim condition. Given any constant vectors $\vec{\alpha}_0, \vec{\gamma}_0 \in \mathbb{R}^3$, there exists a vacuum initial data set $(\bar{g},\bar{\pi})$ within a small neighborhood of $(g, \pi)$ in $W^{2,p}_{-q}\times W^{1,p}_{-1-q}$ and
\[
	\overline{E} = E, \quad \overline{\P} = \P,
\]
and
\[
	\overline{\J} =  \J + \vec{\alpha}_0, \quad \overline{\C} =\C + \vec{\gamma}_0.
\]
\end{THM}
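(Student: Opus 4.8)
The plan is to combine Theorems \ref{thm:am} and \ref{thm:ctm} with a continuity/degree argument applied to a suitable finite-dimensional family of initial data, exploiting the fact that we have one more real parameter available (the overall scale of the added shell perturbation, or equivalently we can work with a map whose domain and target are both $\mathbb{R}^6$). First I would remove the auxiliary restriction $g = v^4\delta$ outside a compact set: by the density theorem of Corvino--Schoen (and its refinements), any vacuum initial data set satisfying the Regge--Teitelboim condition can be approximated arbitrarily well in $W^{2,p}_{-q}\times W^{1,p}_{-1-q}$ by vacuum data that is conformally flat outside a compact set and has the same $(E,\P)$ up to an arbitrarily small error, with $\pi(x)+\pi(-x)$ decaying at the required rate; so it suffices to prove the statement under the conformally flat hypothesis, up to an $\epsilon$ that we absorb at the end.

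Next, the heart of the matter is to upgrade the ``within $\epsilon$'' conclusions of the previous two theorems to \emph{exact} equalities $\overline{E}=E$, $\overline{\P}=\P$. The key observation is that the construction in Theorems \ref{thm:am} and \ref{thm:ctm} depends continuously on parameters: given a target vector $\vec{\beta}=(\vec{\alpha},\vec{\gamma})\in\mathbb{R}^6$ and a base datum, the perturbed datum $(\og,\op)$, hence its conserved quantities $(\overline{E},\overline{\P},\overline{\J},\overline{\C})$, can be produced by a map that is continuous in $\vec{\beta}$ (and in the base datum). I would therefore consider a $3$-parameter family $(g_s,\pi_s)$, $s\in\mathbb{R}^3$, of base vacuum data obtained by the standard ``boost/translation at infinity'' deformations so that $s\mapsto(E(g_s,\pi_s),\P(g_s,\pi_s))$ covers a neighborhood of $(E,\P)$; actually, since only $\P$ need be adjusted by three parameters and $E$ is pinned by the positive mass rigidity once $\P$ and the asymptotics are controlled, a cleaner route is to introduce three extra real parameters directly into the shell perturbation. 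Define
\begin{align*}
	\Phi : \mathbb{R}^3\times\mathbb{R}^3 &\longrightarrow \mathbb{R}^3\times\mathbb{R}^3\\
	(\vec{a},\vec{\beta}) &\longmapsto \big(\,\overline{\P}(\vec{a},\vec{\beta}) - \P,\ \overline{\J}(\vec{a},\vec{\beta}) - \J - \vec{\alpha}_0,\ \text{etc.}\big),
\end{align*}
using $\vec{a}$ for the linear-momentum-adjusting parameters and $\vec{\beta}=(\vec\alpha,\vec\gamma)$ for the angular-momentum/center-of-mass parameters, arranging (via the explicit approximate solutions in the earlier sections) that the differential of $\Phi$ at the origin is, to leading order, block upper triangular with invertible diagonal blocks — the $\vec{a}$-block governing $(\overline{\P},\overline{E})$ and the $\vec{\beta}$-block governing $(\overline{\J},\overline{\C})$. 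Then the inverse function theorem (or a Brouwer degree argument on a small ball, which is more robust to the weak estimates) produces parameters with $\Phi=0$, i.e. exact $\overline{E}=E$, $\overline{\P}=\P$ and $\overline{\J}=\J+\vec\alpha_0$, $\overline{\C}=\C+\vec\gamma_0$.

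The main obstacle I anticipate is establishing the requisite \emph{continuity and non-degeneracy} of the parameter-to-conserved-quantity map with the limited regularity at hand: the conserved quantities are boundary integrals at infinity, so one must show they depend continuously on $(\og,\op)$ in the $W^{2,p}_{-q}\times W^{1,p}_{-1-q}$ topology (this uses $q>1/2$ together with the Regge--Teitelboim-type control on the anti-symmetric parts, which is exactly why the hypothesis $\pi(x)+\pi(-x)=O(|x|^{-1-2q})$ appears in Theorems \ref{thm:am} and \ref{thm:ctm}), and that the solution operator of the vacuum constraints from the earlier sections depends continuously on the source parameters. The non-degeneracy of the diagonal blocks should follow from the explicit form of the linear shell perturbations: the ones used for $\J$ and $\C$ are designed to change precisely those quantities at the linear level (invertible $3\times 3$ block), and independent conformal/vector-field deformations can be used to move $(E,\P)$ with an invertible leading-order response, while by construction the $\J,\C$-perturbations contribute only lower-order terms to $(E,\P)$, giving the block-triangular structure. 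Once continuity and this leading-order picture are in place, the fixed-point step is routine; I would use Brouwer degree rather than the inverse function theorem so as not to need a quantitative $C^1$ bound on $\Phi$.
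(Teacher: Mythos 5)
Your overall architecture --- reduce to conformally flat data by a density theorem, then run a continuity/Brouwer-degree argument on a finite-dimensional parameter space --- matches the paper's strategy, and your preference for degree theory over the inverse function theorem (since only continuity, not differentiability, of the parameter-to-data map is available) is exactly the paper's choice. The continuity issue you flag as the main obstacle is handled in the paper by fixing the shells determined by $k$ and $l$ and producing the tensors for nearby $(\vec{\alpha},\vec{\gamma})$ from fixed ones $(\sigma,\tau,\hat{\sigma})$ by rotation and rescaling, so the dependence on the parameters is manifestly continuous.

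There is, however, a genuine gap in your treatment of the exact equalities $\overline{E}=E$ and $\overline{\P}=\P$. The claim that ``$E$ is pinned by the positive mass rigidity once $\P$ and the asymptotics are controlled'' is false: the positive mass theorem gives only $E\ge|\P|$, and nothing forces $\overline{E}=E$ once $\overline{\P}=\P$. Your alternative --- three extra shell parameters $\vec{a}$ giving an invertible leading-order response in $\overline{\P}$ --- runs against the design of the construction: the rescaled tensors $(\sigma^k,\tau^k)$ are calibrated precisely so that they affect $E$ and $\P$ only at lower order (this is the content of \eqref{eq:masslm} in Proposition \ref{prop:local}), and the changes in $\J$ and $\C$ are \emph{quadratic}, not linear, in $(\sigma,\tau)$ (the moment integrals \eqref{eq:angular} and \eqref{eq:cm} are quadratic), so there is no ``invertible linear block''; moreover your map $\Phi$ as written has mismatched domain and target dimensions and never accounts for $E$ at all. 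The paper closes this gap by a different mechanism: after the approximate construction it rescales $(\tilde g,\tilde\pi)\mapsto(\lambda^2\tilde g,\lambda\tilde\pi)$ to make the Lorentz invariant $\overline{E}^2-|\overline{\P}|^2$ exactly equal to $E^2-|\P|^2$, then applies a small boost (via the boost theorem of Christodoulou--\'O Murchadha) to achieve $\overline{E}=E$, hence $|\overline{\P}|=|\P|$, and finally a rotation to get $\overline{\P}=\P$; all of these move $(\J,\C)$ only by $O(\epsilon)$, after which the six-dimensional degree argument in $(\vec{\alpha},\vec{\gamma})$ alone pins down $\overline{\J}=\J+\vec{\alpha}_0$ and $\overline{\C}=\C+\vec{\gamma}_0$. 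Without some such mechanism your argument cannot reach exact equality of the energy-momentum vector.
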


In Section 2 we give an explicit construction of solutions of the linearized
constraint equations which satisfy a certain moment condition.  Sections 3, 4, and 5 are devoted to the proofs of the main theorems. We remark that the constructions of Section 2 are explicit, while the
method of solving the exact constraint equations from the approximate solution involves constructing
a small solution of an elliptic system with leading order term the diagonal Laplace equation. It
should be possible to numerically approximate the resulting solutions to a high degree of accuracy.


\section{Compactly supported solutions of the linearized constraints}
Recall that the vacuum constraint equations for initial data $(g,\pi)$ may be written
\[
	R(g)+\frac{1}{2}\left( \textup{Tr}_g\pi \right)^2-|\pi|^2= 0,\quad \div_g(\pi)=0,
\]
where $\pi_{ij}=K_{ij}-\textup{Tr}_g(K) g_{ij}$ is the momentum tensor. In general, we consider the constraint map $\Phi$ defined by
\[
	\Phi (g, \pi)=(R(g)+\frac{1}{2}\left( \textup{Tr}_g\pi \right)^2-|\pi|^2, \div_g (\pi)).
\]
The vacuum constraint equations, linearized at the trivial data $(\delta,0)$,
become
\[
	L\sigma:=\sum_{i,j}(\sigma_{ij,ij}-\sigma_{ii,jj})=0,\quad \div(\tau)=0,
\]
for symmetric $(0,2)$ tensors $(\sigma,\tau)$.
In this section we will construct solutions of the linearized constraint equations which are
compactly supported in the shell $A_1=\{x:\ 1<|x|<2\}$ contained in $\mathbb{R}^3$
and which have certain specified moment conditions with respect to rotation vector fields. We use the Einstein summation convention and sum over repeated indices; though, sometimes we employ summation symbols for clarity.

We write the Euclidean metric on $A_1$ in spherical coordinates $dr^2+r^2 \tilde{g}_{ab}dx^a dx^b$ where $\tilde{g}$ is the standard round metric on $S^2$. The coordinates are labeled by $r=x^0$ and $\theta, \phi=x^1,x^2$. The ranges for the indices are $i, j, k, l,...=0, 1, 2$, and $a, b, c, d, e, ...= 1, 2$.
If $\alpha,\beta$ are one-forms, we define the symmetric product $\alpha\odot\beta$ to be
the symmetric $(0,2)$ tensor whose components are
\[
	(\alpha\odot\beta)_{ij}=\frac{1}{2}(\alpha_i\beta_j+\alpha_j\beta_i).
\]
We first impose the following ansatz for our solutions.

\begin{lemma}\label{ansatz}
Suppose that $q$ and $Q$ are functions of $r$, $\tilde{\alpha}$ is a one-form on $S^2$, and $\tilde{\tau}$ is a trace-free symmetric $(0,2)$ tensor on $S^2$. Then $\tau=2q\tilde{\alpha}\odot dr+ Q \tilde{\tau}$ is a trace-free symmetric $(0,2)$ tensor on $A_1$. The condition $\div(\tau)=0$ becomes
 \[
 	\widetilde{\div}\tilde{\alpha}=0 \quad \text{ and } \quad (r^2q)'\tilde{\alpha} + Q(\widetilde{\div}{\tilde{\tau}})=0,
 \]
 where $\widetilde{\div}$ is the divergence operator of $S^2$ on tensors.

 Suppose that $p$ and  $P$ are functions of $r$, $\tilde{\eta}$ is a one-form on $S^2$, and $\tilde{\sigma}$ is a trace-free symmetric $(0,2)$ tensor on $S^2$.
Then $\sigma=2p\tilde{\eta}\odot dr+ P \tilde{\sigma}$ is a trace-free symmetric $(0,2)$ tensor on $A_1$,  and $L \sigma =0$ if
\[
	2r(r p)' \widetilde{\div}\tilde{\eta}+ P(\widetilde{\div}\widetilde{\div} \tilde{\sigma})=0.
\]	
\end{lemma}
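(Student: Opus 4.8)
The plan is to prove both assertions by a direct computation in the spherical coordinates $(r,\theta,\phi)$, using the Christoffel symbols of the warped-product metric $g = dr^2 + r^2\tilde g$. Its only nonvanishing symbols are $\Gamma^0_{ab} = -r\,\tilde g_{ab}$, $\Gamma^a_{0b} = r^{-1}\delta^a_b$, and $\Gamma^a_{bc} = \tilde\Gamma^a_{bc}$ (the Christoffel symbols of $(S^2,\tilde g)$), while $g^{00}=1$, $g^{0a}=0$, $g^{ab} = r^{-2}\tilde g^{ab}$. First I would record the block form of the two tensors. Since $\tilde\alpha$ is a one-form on $S^2$ (hence has no $dr$-component) and $dr$ has only a $0$-component, $2q\,\tilde\alpha\odot dr$ has nonzero components only in the mixed slots, $\tau_{0a} = q\,\tilde\alpha_a$, with $\tau_{00}=0$; while $Q\tilde\tau$ lives entirely in the angular slots, $\tau_{ab} = Q\,\tilde\tau_{ab}$. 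Symmetry is then clear, and trace-freeness is immediate since $\mathrm{tr}_g\tau = g^{ab}\tau_{ab} = r^{-2}\,\mathrm{tr}_{\tilde g}\tilde\tau = 0$; the same holds for $\sigma$.

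Next I would compute $\div_g\tau$ slot by slot. For the radial slot, $(\div_g\tau)_0 = g^{ab}\nabla_a\tau_{b0}$; expanding the covariant derivative, the $\Gamma$-terms contribute a multiple of $\tilde\tau$ contracted against $\tilde g^{ab}$, which vanishes by trace-freeness, leaving $(\div_g\tau)_0 = r^{-2} q\,\widetilde{\div}\tilde\alpha$, so $\div_g\tau=0$ forces $\widetilde{\div}\tilde\alpha = 0$ (at least where $q\neq 0$, hence everywhere once $q\not\equiv 0$). For the angular slot, $(\div_g\tau)_a = \nabla_0\tau_{0a} + g^{bc}\nabla_b\tau_{ca}$; here the contractions $g^{bc}\Gamma^0_{bc}\tau_{0a}$ and $g^{bc}\Gamma^0_{ba}\tau_{c0}$ generate precisely the $r$-dependent coefficients, and after collecting terms and using $q' + 2q/r = (r^2 q)'/r^2$ I expect
\[
(\div_g\tau)_a = \tfrac{1}{r^2}\big[(r^2 q)'\,\tilde\alpha_a + Q\,(\widetilde{\div}\tilde\tau)_a\big],
\]
which yields the second stated condition.

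For the operator $L$, I would first note that $L$ is the linearization at $\delta$ of the scalar curvature map (the $\pi$-terms of $\Phi$ being quadratic, they drop out), so in coordinate-free form $L\sigma = \div_\delta\div_\delta\sigma - \Delta_\delta(\mathrm{tr}_\delta\sigma)$; since our $\sigma$ is trace-free this reduces to $L\sigma = \div_g\big(\div_g\sigma\big)$, which may be computed in spherical coordinates. Writing $\omega := \div_g\sigma$ and reusing the previous computation with $(q,Q,\tilde\alpha,\tilde\tau)$ replaced by $(p,P,\tilde\eta,\tilde\sigma)$ gives $\omega_0 = r^{-2} p\,\widetilde{\div}\tilde\eta$ and $\omega_a = r^{-2}\big[(r^2 p)'\,\tilde\eta_a + P(\widetilde{\div}\tilde\sigma)_a\big]$. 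Then, from the Christoffel symbols, $\div_g\omega = \partial_r\omega_0 + \tfrac{2}{r}\omega_0 + r^{-2}\,\widetilde{\div}(\omega|_{S^2})$; evaluating $\partial_r\omega_0 + \tfrac{2}{r}\omega_0 = r^{-2}\partial_r(r^2\omega_0) = r^{-2} p'\,\widetilde{\div}\tilde\eta$ and $\widetilde{\div}(\omega|_{S^2}) = r^{-2}\big[(r^2 p)'\,\widetilde{\div}\tilde\eta + P\,\widetilde{\div}\widetilde{\div}\tilde\sigma\big]$, and simplifying $r^2 p' + (r^2 p)' = 2r(rp)'$, I obtain
\[
L\sigma = \tfrac{1}{r^4}\big[2r(rp)'\,\widetilde{\div}\tilde\eta + P\,\widetilde{\div}\widetilde{\div}\tilde\sigma\big],
\]
which vanishes when the displayed condition holds.

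The argument is elementary throughout; the only places demanding care are the bookkeeping of the Christoffel contractions — in particular keeping distinct the ambient and the intrinsic ($S^2$) covariant derivatives and correctly tracking the powers of $r$ produced by $\Gamma^0_{ab} = -r\tilde g_{ab}$ and $\Gamma^a_{0b} = r^{-1}\delta^a_b$ — together with the reduction $L\sigma = \div_\delta\div_\delta\sigma$ for trace-free $\sigma$. I do not anticipate any genuine obstacle.
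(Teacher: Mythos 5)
Your proposal is correct and follows essentially the same route as the paper: a direct computation of $\div$ and $\div\div$ in the spherical coordinates of the warped-product metric $dr^2+r^2\tilde g$ (the paper organizes the bookkeeping through the identity $(\div h)_i=\tfrac{1}{\sqrt g}\partial_k(\sqrt g\,h_i^k)+\tfrac12 h_{jk}\partial_i g^{jk}$ rather than raw Christoffel symbols, but the content is identical). Your intermediate formulas for $(\div\tau)_0$, $(\div\tau)_a$, and $\div\div\sigma$, as well as the reduction $L\sigma=\div\div\sigma$ for trace-free $\sigma$, all match the paper's.
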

This lemma follows directly from the following two computational lemmas.
For a coordinate system $x^i$ on $\R^3$, denote by $g_{ij} dx^i dx^j$ the Euclidean metric.
\begin{lemma} Let $h$ be any symmetric $(0,2)$ tensor on $\R^3$, then
\[
	(\div h)_i=\frac{1}{\sqrt{g}}\frac{\partial}{\partial x^k}(\sqrt{g} h_i^k)+\frac{1}{2} h_{jk} \frac{\partial }{\partial x^i}(g^{jk}).
\]
\end{lemma}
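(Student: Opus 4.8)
The plan is to start from the intrinsic definition $(\div h)_i = \nabla_k h_i^k$, where $\nabla$ is the Levi--Civita connection of $g$ and indices are raised with $g^{jk}$, and simply unwind the covariant derivative in the given coordinates. Expanding, $\nabla_k h_i^k = \partial_k h_i^k + \Gamma^k_{kl} h_i^l - \Gamma^l_{ki} h_l^k$. The first two terms combine, via the standard identity $\Gamma^k_{kl} = \partial_l \log\sqrt{g} = \frac{1}{\sqrt{g}}\partial_l\sqrt{g}$, into $\frac{1}{\sqrt{g}}\partial_k(\sqrt{g}\,h_i^k)$, which is the first term on the right-hand side of the claimed formula. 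So it remains only to show that the leftover term satisfies $-\Gamma^l_{ki}h_l^k = \frac{1}{2}h_{jk}\,\partial_i g^{jk}$.

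To handle that term I would lower all the summed indices on $h$: writing $h^{km} = g^{kp}g^{mq}h_{pq}$, one has $\Gamma^l_{ki}h_l^k = h^{km}\Gamma_{m,ki}$, where $\Gamma_{m,ki} = \frac{1}{2}(\partial_k g_{mi} + \partial_i g_{mk} - \partial_m g_{ki})$ is the Christoffel symbol of the first kind. Because $h^{km}$ is symmetric in $k$ and $m$, relabelling the dummy indices gives $h^{km}\partial_k g_{mi} = h^{km}\partial_m g_{ki}$, so the first and third terms cancel and $\Gamma^l_{ki}h_l^k = \frac{1}{2}h^{km}\partial_i g_{km}$.

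Finally, differentiating $g^{jk}g_{kl} = \delta^j_l$ with respect to $x^i$ yields $\partial_i g^{jk} = -g^{jp}g^{kq}\partial_i g_{pq}$, hence $h_{jk}\partial_i g^{jk} = -h^{pq}\partial_i g_{pq} = -h^{km}\partial_i g_{km}$; combining this with the previous paragraph gives $-\Gamma^l_{ki}h_l^k = -\frac{1}{2}h^{km}\partial_i g_{km} = \frac{1}{2}h_{jk}\partial_i g^{jk}$, and adding back the term $\frac{1}{\sqrt{g}}\partial_k(\sqrt{g}\,h_i^k)$ from the first step completes the proof. The argument is a routine local computation, valid for any Riemannian metric; the only care needed is in the index bookkeeping for the symmetrization cancellation and in keeping the raising and lowering conventions consistent, so there is no genuine obstacle here.
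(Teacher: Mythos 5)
Your proof is correct and follows essentially the same route as the paper: both reduce to the identity $(\div h)_i=\frac{1}{\sqrt{g}}\partial_k(\sqrt{g}\,h_i^k)-h_l^k\Gamma^l_{ki}$ (the paper via pairing with a test vector field, you via the contracted Christoffel identity $\Gamma^k_{kl}=\partial_l\log\sqrt{g}$) and then substitute the formula for the Christoffel symbols. You have in fact written out in full the last step that the paper dismisses with ``plug in the formula for $\Gamma^k_{ij}$,'' namely the symmetrization cancellation and the identity $\partial_i g^{jk}=-g^{jp}g^{kq}\partial_i g_{pq}$, and that bookkeeping is done correctly.
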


\begin{proof} Let $V^i$ be any vector field. We have $V^i(h_i^k)_{;k}=( V^i h_i^k)_{;k}-h_i^k V^i_{;k}$. Now
\[
	( V^i h_i^k)_{;k}=\frac{1}{\sqrt{g}}\frac{\partial}{\partial x^k}(V^i h_i^k\sqrt{g})=V^i\frac{1}{\sqrt{g}}\frac{\partial}{\partial x^k}(\sqrt{g} h_i^k)+ h_i^k \frac{\partial V^i}{\partial x^k}.
\]
Therefore $ V^i (h_i^k)_{;k}= V^i\frac{1}{\sqrt{g}}\frac{\partial}{\partial x^k}(\sqrt{g} h_i^k)- h_i^k \Gamma_{kl}^i V^l$ and then
\[
	(\div h)_i=\frac{1}{\sqrt{g}}\frac{\partial}{\partial x^k}(\sqrt{g} h_i^k)- h_l^k \Gamma^l_{ki}.
\]
Lastly, we plug in the formula for $\Gamma_{ij}^k$.
\end{proof}

Now we apply this to the spherical coordinates
\[
g_{ij} dx^i dx^j=dr^2+r^2d\theta^2+r^2\sin^2\theta d\phi^2=dr^2+r^2 \tilde{g}_{ab} dx^a dx^b
\]
and write $h=h_{00} dr^2+2h_{0a} dx^a dr+h_{ab} dx^a dx^b$ and $\alpha=\alpha_0 dr+\alpha_a dx^a$.

\begin{lemma} Let $h$ be a symmetric $(0,2)$ tensor and $\alpha$ a one-form on $\R^3$, then
\[
	\begin{split}\div h&=[ r^{-2} \frac{\partial}{\partial r}(r^2h_{00})+r^{-2} \frac{1}{\sqrt{\tilde{g}}}\frac{\partial}{\partial x^a}(\sqrt{\tilde{g}} \tilde{g}^{ab} h_{0b})-r^{-3} \tilde{g}^{ab} h_{ab}] dr\\
	&\quad +\{ r^{-2} \frac{\partial}{\partial r} (r^2 h_{a0})+ r^{-2}[ \frac{1}{\sqrt{\tilde{g}}}\frac{\partial}{\partial x^b}(\sqrt{\tilde{g}} \tilde{g}^{bc} h_{ac})+\frac{1}{2}h_{bc}\frac{\partial}{\partial x^a} \tilde{g}^{bc}]\} dx^a\end{split}
\]
and
\[
	\div \alpha=r^{-2} \frac{\partial}{\partial r}(r^2\alpha_0)+r^{-2}\frac{1}{\sqrt{\tilde{g}}}\frac{\partial}{\partial x^b}(\sqrt{\tilde{g}} \tilde{g}^{ab} \alpha_{a}).
\]
\end{lemma}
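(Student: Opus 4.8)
The plan is to specialize the general divergence formula of the previous lemma, namely
\[
	(\div h)_i=\frac{1}{\sqrt{g}}\frac{\partial}{\partial x^k}\bigl(\sqrt{g}\, h_i^{\;k}\bigr)+\frac{1}{2} h_{jk} \frac{\partial }{\partial x^i}\bigl(g^{jk}\bigr),
\]
to the spherical-coordinate metric $g = dr^2 + r^2 \tilde g_{ab}\,dx^a dx^b$, and to read off the component in the $dr$ direction and the components in the $dx^a$ directions separately. First I would record the elementary facts about this metric: $\sqrt{g} = r^2 \sqrt{\tilde g}$, $g^{00}=1$, $g^{ab} = r^{-2}\tilde g^{ab}$, and $g^{0a}=0$; and consequently $h_0^{\;0} = h_{00}$, $h_0^{\;a} = r^{-2}\tilde g^{ab} h_{0b}$, $h_a^{\;0} = h_{a0}$, $h_a^{\;b} = r^{-2}\tilde g^{bc} h_{ac}$. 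Note also that $g^{jk}$ depends on $r$ only through $g^{ab}=r^{-2}\tilde g^{ab}$ (with $\partial_r g^{ab} = -2r^{-3}\tilde g^{ab}$) and depends on the angular variables only through $\tilde g^{ab}$.

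For the $i=0$ component, the term $\tfrac12 h_{jk}\partial_r(g^{jk})$ contributes $\tfrac12 h_{ab}\cdot(-2r^{-3}\tilde g^{ab}) = -r^{-3}\tilde g^{ab}h_{ab}$, while $\tfrac{1}{\sqrt g}\partial_k(\sqrt g\, h_0^{\;k})$ splits into the $k=0$ piece $r^{-2}\partial_r(r^2 h_{00})$ and the $k=a$ piece $r^{-2}\tfrac{1}{\sqrt{\tilde g}}\partial_{x^a}(\sqrt{\tilde g}\,\tilde g^{ab} h_{0b})$, using $\sqrt g = r^2\sqrt{\tilde g}$ and the fact that $r$ factors out of the angular derivative. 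Adding these gives exactly the bracketed coefficient of $dr$ in the statement. For the $i=a$ component, one has $\partial_{x^a} g^{00}=0$ and $\partial_{x^a} g^{0b}=0$, so $\tfrac12 h_{jk}\partial_{x^a}(g^{jk}) = \tfrac12 r^{-2} h_{bc}\partial_{x^a}\tilde g^{bc}$; meanwhile $\tfrac{1}{\sqrt g}\partial_k(\sqrt g\, h_a^{\;k})$ gives $r^{-2}\partial_r(r^2 h_{a0})$ from $k=0$ and $r^{-2}\tfrac{1}{\sqrt{\tilde g}}\partial_{x^b}(\sqrt{\tilde g}\,\tilde g^{bc} h_{ac})$ from $k=b$, again after pulling the $r$-powers through. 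This yields the coefficient of $dx^a$. The one-form formula for $\div\alpha$ is the same computation applied to $\alpha$ in place of $h_{\cdot}^{\;\cdot}$, with no $\tfrac12 h_{jk}\partial g^{jk}$-type term since the divergence of a one-form (i.e. its trace against the connection) is $\tfrac{1}{\sqrt g}\partial_k(\sqrt g\, g^{ki}\alpha_i)$, which decomposes into the $k=0$ and $k=a$ pieces shown.

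The main thing to be careful about — the only place an error would creep in — is the bookkeeping of the $r$-powers: one must consistently use $\sqrt g = r^2\sqrt{\tilde g}$ so that $r^2$ passes outside the angular derivatives $\partial_{x^a}$ but not outside $\partial_r$, and use $h_i^{\;k} = g^{kl}h_{il}$ with $g^{ab} = r^{-2}\tilde g^{ab}$ so that raising an angular index costs an $r^{-2}$. Once these are tracked, every term lands in its stated place and the identity follows. There is no analytic subtlety here; it is a direct substitution.
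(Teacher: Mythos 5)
Your proposal is correct and is exactly the computation the paper intends: the lemma is obtained by specializing the general coordinate formula $(\div h)_i=\frac{1}{\sqrt{g}}\partial_{x^k}(\sqrt{g}\, h_i^{\;k})+\frac{1}{2} h_{jk}\partial_{x^i}(g^{jk})$ of the preceding lemma to the metric $dr^2+r^2\tilde g_{ab}dx^adx^b$, using $\sqrt g=r^2\sqrt{\tilde g}$ and $g^{ab}=r^{-2}\tilde g^{ab}$, and your bookkeeping of the $r$-powers in each term checks out.
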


We shall take the following ansatz on $h$ that $h_{00}=0$ and $\tilde{g}^{ab} h_{ab}=0$. We also assume that $h_{0a}=p(r) \tilde{\eta}_a$ and $h_{ab}=P(r)\tilde{h}_{ab}$ for a one-form $\tilde{\eta}$ and a symmetric $(0,2)$ tensor $\tilde{h}$ on $S^2$; that is, we have

\[h=2p(r)\tilde{\eta}\odot dr+ P(r)\tilde{h}.\]
We then have
\[
	\div h=(r^{-2} p) ( \widetilde{\div}\tilde{\eta}) dr+r^{-2} (r^2 p)'\tilde{\eta}+r^{-2} P
\widetilde{\div}{\tilde{h}}
\]
and
\[
	\div \div h=2 r^{-3} (rp)' (\widetilde{\div}\tilde{\eta})+r^{-4} P (\widetilde{\div}\widetilde{\div} \tilde{h}).
\]
Hence, Lemma \ref{ansatz} follows directly from the above two identities.

In particular, if we consider the tensor $\tau$ of the following expression
\begin{equation}\label{ansatz_tau}
	\tau=-2q \widetilde{\div}(\tilde{\tau})\odot dr+(r^2q)'{\tilde{\tau}}.
\end{equation}
Then $\tau$ satisfies $\div \tau=0$ on $A_1$ if
\begin{equation}\label{eq_tau}
	\widetilde{\div} \widetilde{\div}{\tilde{\tau}}=0\quad \mbox{on } S^2.
\end{equation}
Also, we consider $\sigma$ as follows:
\begin{equation}
	\label{ansatz_sigma}\sigma=-2p\widetilde{\div}\tilde{\sigma}\odot dr+ 2r(r p)'\tilde{\sigma}.
\end{equation}
Then $\sigma$ satisfies $L\sigma=0$ on $A_1$ for any trace-free symmetric $(0,2)$ tensor $\tilde{\sigma}$ on $S^2$.

In the following, we show that there are nontrivial solutions to \eqref{eq_tau}. We can start from any one-form $\tilde{\eta}$ on $S^2$ and construct a trace-free symmetric $(0,2)$ tensor $\tilde{\delta}^* \tilde{\eta}$ on $S^2$ by defining
\[(\tilde{\delta}^* \tilde{\eta})_{ab}=\frac{1}{2}(\tilde{\eta}_{a;b}+\tilde{\eta}_{b;a}-\tilde{g}^{cd}\tilde{\eta}_{c;d} \tilde{g}_{ab}).\]

We need the following computational result.
\begin{lemma}\label{div}
\[
	\widetilde{\div}\tilde{\delta}^*\tilde{\eta}=-\frac{1}{2}(d d^*+d^* d)\tilde{\eta}+\tilde{\eta}.
\]
In particular, if $u$ is a function on $S^2$, then
\[
	\widetilde{\div}\tilde{\delta}^*du=\frac{1}{2}d( \Delta u+2u),
\]
and
\[
	\widetilde{\div}\tilde{\delta}^*(*du)=\frac{1}{2}*d( \Delta u+2u).
\]
\end{lemma}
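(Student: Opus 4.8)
The plan is to establish the general identity by a direct tensor computation on the round sphere, using the Ricci identity and the Weitzenb\"ock formula, and then to read off the two special cases by substitution.

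First I would rewrite $\tilde{\delta}^*\tilde{\eta}$ as the trace-free part of the symmetrized covariant derivative, $(\tilde{\delta}^*\tilde{\eta})_{ab}=\frac12(\tilde{\eta}_{a;b}+\tilde{\eta}_{b;a})-\frac12(\widetilde{\div}\tilde{\eta})\tilde{g}_{ab}$, where $\widetilde{\div}\tilde{\eta}=\tilde{g}^{cd}\tilde{\eta}_{c;d}$. Taking one more divergence, $(\widetilde{\div}\tilde{\delta}^*\tilde{\eta})_a=\tilde{g}^{bc}(\tilde{\delta}^*\tilde{\eta})_{ab;c}$ splits into three pieces: a connection-Laplacian term $\frac12\tilde{g}^{bc}\tilde{\eta}_{a;bc}=-\frac12(\nabla^*\nabla\tilde{\eta})_a$; a term $\frac12\tilde{g}^{bc}\tilde{\eta}_{b;ac}$; and a term $-\frac12(\widetilde{\div}\tilde{\eta})_{;a}$ coming from the trace part. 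In the middle term I would commute the two covariant derivatives: $\tilde{g}^{bc}\tilde{\eta}_{b;ac}=\tilde{g}^{bc}\tilde{\eta}_{b;ca}+(\text{curvature terms})=(\widetilde{\div}\tilde{\eta})_{;a}+(\text{curvature terms})$, so that the rearranged gradient-of-divergence term cancels against $-\frac12(\widetilde{\div}\tilde{\eta})_{;a}$, while the Ricci identity together with $\mathrm{Ric}_{\tilde g}=\tilde{g}$ on the unit $S^2$ turns the curvature contribution into $+\frac12\tilde{\eta}_a$. What survives is exactly $\widetilde{\div}\tilde{\delta}^*\tilde{\eta}=-\frac12\nabla^*\nabla\tilde{\eta}+\frac12\tilde{\eta}$.

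Next I would invoke the Weitzenb\"ock formula for one-forms, $dd^*+d^*d=\nabla^*\nabla+\mathrm{Ric}$, which on the unit $S^2$ reads $(dd^*+d^*d)\tilde{\eta}=\nabla^*\nabla\tilde{\eta}+\tilde{\eta}$; substituting $\nabla^*\nabla\tilde{\eta}=(dd^*+d^*d)\tilde{\eta}-\tilde{\eta}$ into the previous display gives $\widetilde{\div}\tilde{\delta}^*\tilde{\eta}=-\frac12(dd^*+d^*d)\tilde{\eta}+\tilde{\eta}$, the first assertion. The two ``in particular'' statements then follow by substitution: with $\tilde{\eta}=du$ one has $d(du)=0$, hence $d^*d\,du=0$ and $dd^*du=d(d^*du)=-d(\Delta u)$ (with the convention $\Delta=-d^*d$ on functions used in the paper), giving $\widetilde{\div}\tilde{\delta}^*du=\frac12 d(\Delta u)+du=\frac12 d(\Delta u+2u)$; with $\tilde{\eta}=*du$ one has $d^*(*du)=0$, so $dd^*(*du)=0$, and since the Hodge Laplacian commutes with $*$ we get $(dd^*+d^*d)(*du)=*\big((dd^*+d^*d)du\big)=*d(d^*du)=-*d(\Delta u)$, whence $\widetilde{\div}\tilde{\delta}^*(*du)=\frac12 *d(\Delta u)+*du=\frac12 *d(\Delta u+2u)$.

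The one delicate point is the sign bookkeeping in the Ricci-identity step: it is precisely the sign of the curvature contraction that produces the coefficient $1$ in front of $\tilde{\eta}$ (as opposed to $-1$ or $0$), so I would fix a curvature sign convention at the outset and carry it carefully through the commutation. Once that term is pinned down, the rest of the argument — identifying $\tilde{g}^{bc}\tilde{\eta}_{a;bc}$ with $-\nabla^*\nabla\tilde{\eta}$, cancelling the two gradient-of-divergence terms, and applying Weitzenb\"ock — is routine.
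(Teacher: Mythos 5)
Your argument is correct, and it is a genuinely tighter version of what the paper does. The paper's own proof works in an orthonormal frame and only verifies the identity for $\tilde{\eta}=du$: it computes $(\widetilde{\div}\tilde{\delta}^*du)_1$ and $(\tfrac12(dd^*+d^*d)du)_1$ separately, adds them, and reduces the result to $u_{1;22}-u_{2;21}=u_1$ via the commutation formula $u_{b;ab}=u_{b;ba}+u_a$ ($a\ne b$), leaving the general one-form case and the $*du$ case to ``other formulae can be checked similarly.'' You instead prove the first displayed identity for an \emph{arbitrary} one-form by splitting $\tilde{g}^{bc}(\tilde{\delta}^*\tilde{\eta})_{ab;c}$ into the rough Laplacian, a commuted gradient-of-divergence term (which cancels the trace contribution), and the curvature term $\tfrac12\tilde{\eta}_a$ from $\mathrm{Ric}_{\tilde g}=\tilde g$, and then converting $\nabla^*\nabla$ into the Hodge Laplacian via Weitzenb\"ock; the two ``in particular'' statements then really are corollaries of the general identity rather than separate computations. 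This buys a proof of the lemma exactly as stated (for general $\tilde{\eta}$) and makes the $*du$ case immediate from $d^*(*du)=0$ and the commutation of the Hodge Laplacian with $*$, at the cost of invoking the Weitzenb\"ock formula, which the paper avoids. Your flagged concern about the curvature sign is the right one to worry about, and your resolution is consistent with the paper's commutation formula, so the coefficient $+\tilde{\eta}$ comes out correctly.
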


\begin{proof}
We recall that for one-forms on $S^2$, we have $d^*= -*d*$, and for a function $u$,  $d^*du=-\Delta u$. We assume $\tilde{\eta}=du$ and compute in an orthonormal frame:
\[
	(\widetilde{\div}(\tilde{\delta}^*du))_1=\frac{1}{2}(u_{1;11}-u_{2;21})+ u_{1;22}.
\]
On the other hand, $(\frac{1}{2}(d d^*+d^* d) du)_1=-\frac{1}{2} (\Delta u)_1=-\frac{1}{2}(u_{1;11}+u_{2;21})$. Therefore,
\[
	(\widetilde{\div}\tilde{\delta}^*du)_1+(\frac{1}{2}(d d^*+d^* d) du)_1=u_{1;22}-u_{2;21}.
\]
At last, we use the commutation formula $u_{b;ab}=u_{b;ba}+u_a$ for $a\neq b$ on $S^2$. Other formulae can be checked similarly.
\end{proof}

The next lemma shows how solutions of Equation (\ref{eq_tau}) can be constructed:

\begin{lemma}\label{div_eq}
Suppose that $\tilde{\alpha}$ is any one-form on $S^2$. Then $\tilde{\tau}=\tilde{\delta}^*\tilde{\alpha}$ is a trace-free symmetric $(0,2)$ tensor on $S^2$. Moreover, if $\tilde{\alpha}=*du$ for a function $u$ on $S^2$, then $\tilde{\tau}$ satisfies $\widetilde{\div}  \widetilde{\div}\tilde{\tau}=0$.
\end{lemma}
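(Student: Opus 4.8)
The plan is to reduce the whole statement to Lemma~\ref{div}, which is already in hand. For the first assertion one simply reads it off the definition: the formula
\[
(\tilde\delta^*\tilde\alpha)_{ab}=\tfrac{1}{2}\bigl(\tilde\alpha_{a;b}+\tilde\alpha_{b;a}-\tilde g^{cd}\tilde\alpha_{c;d}\,\tilde g_{ab}\bigr)
\]
is manifestly symmetric in $a,b$, and contracting with $\tilde g^{ab}$ (using $\tilde g^{ab}\tilde g_{ab}=2$ on $S^2$) cancels the trace of the first two terms against the last one, so $\tilde\tau=\tilde\delta^*\tilde\alpha$ is a trace-free symmetric $(0,2)$ tensor.

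For the second assertion I would apply Lemma~\ref{div} with $\tilde\eta=*du$, which gives
\[
\widetilde{\div}\,\tilde\tau=\widetilde{\div}\,\tilde\delta^*(*du)=\tfrac{1}{2}\,*d(\Delta u+2u),
\]
and then it suffices to observe that $\widetilde{\div}(*dw)=0$ for any smooth function $w$ on $S^2$ (applied here with $w=\Delta u+2u$). Indeed $dw$ is closed, so its Hodge dual $*dw$ is co-closed, $d^*(*dw)=0$, and hence $\widetilde{\div}(*dw)=-d^*(*dw)=0$ using the convention $\widetilde{\div}\beta=-d^*\beta$ on one-forms of $S^2$. Therefore $\widetilde{\div}\,\widetilde{\div}\,\tilde\tau=0$.

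I do not expect a genuine obstacle: all the analytic content has been absorbed into Lemma~\ref{div}. The only points that need care are fixing the Hodge-star and codifferential sign conventions on $S^2$ consistently, and --- for the iterated operator $\widetilde{\div}\,\widetilde{\div}$ --- keeping track that the inner $\widetilde{\div}$ sends the symmetric $(0,2)$ tensor $\tilde\tau$ to a one-form while the outer one sends that one-form to a function, in agreement with the way $\widetilde{\div}\,\widetilde{\div}\,\tilde\sigma$ enters the statement of Lemma~\ref{ansatz}.
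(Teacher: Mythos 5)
Your proposal is correct and follows essentially the same route as the paper: apply Lemma~\ref{div} to get $\widetilde{\div}\,\tilde\tau=\tfrac{1}{2}*d(\Delta u+2u)$ and then observe that the divergence of $*dw$ vanishes for any function $w$ (the paper phrases this as $(d^*)^2=0$ after writing $*dw$ as a codifferential, while you phrase it via $d^2=0$ and Hodge duality --- the same fact). The direct verification that $\tilde\delta^*\tilde\alpha$ is symmetric and trace-free is exactly what the paper leaves implicit.
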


\begin{proof}
This follows from the formula of Lemma \ref{div},
\[
	\widetilde{\div}\tilde{\tau}=\frac{1}{2}*d(\Delta u+2u),
\]
together with the fact that $(d^*)^2=0$.
\end{proof}
We need the following computational result.
\begin{lemma}\label{integral}
Suppose that $\tau=2q\tilde{\alpha}\odot dr+ Q \tilde{\tau}$, $\sigma=2p\tilde{\eta}\odot dr+ P \tilde{\sigma}$, and $Y=Y^a\frac{\partial}{\partial x^a}$ is tangent to $S^2$. Then
\[
	\begin{split}
	&\left(\frac{1}{2} \tau_{ij,l} Y^l +\tau_{il} Y^l_{,j} \right)\sigma^{ij}\\
	&=pq r^{-2} (\tilde{\alpha}_{b;a}Y^a+\tilde{\alpha}_a Y^a_{;b}) \tilde{\eta}_c\tilde{g}^{bc}
+\frac{1}{2}PQ r^{-4}(\tilde{\tau}_{bc;a}Y^a+\tilde{\tau}_{ab} Y^a_{;c}+\tilde{\tau}_{ac}Y^a_{;b})\tilde{g}^{bd}\tilde{\sigma}_{de} \tilde{g}^{ec},
	\end{split}
\]
where $\tilde{\alpha}_{b;a}$, $\tilde{\tau}_{bc;a}$, and $Y^a_{;b}$ denote covariant derivatives of $\tilde{\alpha}$, $\tilde{\tau}$, and $Y$ with respect to the standard metric $\tilde{g}_{ab}$ on $S^2$.
\end{lemma}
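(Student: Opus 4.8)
The plan is to first reduce the left-hand side to one half of the pairing $\langle \mathcal{L}_Y\tau,\sigma\rangle$ of the Lie derivative of $\tau$ along $Y$ against $\sigma$, the indices being raised with the Euclidean metric. Since $\sigma^{ij}$ is symmetric, contracting $\frac{1}{2}\tau_{ij,l}Y^l+\tau_{il}Y^l_{,j}$ against $\sigma^{ij}$ only sees its symmetrization in $(i,j)$, which is $\frac{1}{2}(\tau_{ij,l}Y^l+\tau_{il}Y^l_{,j}+\tau_{jl}Y^l_{,i})=\frac{1}{2}(\mathcal{L}_Y\tau)_{ij}$. Thus the left-hand side equals $\frac{1}{2}(\mathcal{L}_Y\tau)_{ij}\sigma^{ij}$, and it remains to evaluate this in spherical coordinates.

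Next I would record the components in the coordinates $x^0=r$ and $x^1,x^2$ on $S^2$. The forms assumed for $\tau$ and $\sigma$ give $\tau_{00}=0$, $\tau_{0a}=q\tilde\alpha_a$, $\tau_{ab}=Q\tilde\tau_{ab}$, and likewise $\sigma_{00}=0$, $\sigma_{0a}=p\tilde\eta_a$, $\sigma_{ab}=P\tilde\sigma_{ab}$. Since the Euclidean metric is $dr^2+r^2\tilde g_{ab}\,dx^a dx^b$, we have $g^{00}=1$, $g^{0a}=0$, $g^{ab}=r^{-2}\tilde g^{ab}$, so $\sigma^{00}=0$, $\sigma^{0a}=pr^{-2}\tilde g^{ab}\tilde\eta_b$, and $\sigma^{ab}=Pr^{-4}\tilde g^{ac}\tilde g^{bd}\tilde\sigma_{cd}$; this is where the weights $r^{-2}$ and $r^{-4}$ in the statement originate. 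Because $\sigma^{00}=0$, only the mixed and the purely angular components of $\mathcal{L}_Y\tau$ contribute, and by symmetry $(\mathcal{L}_Y\tau)_{ij}\sigma^{ij}=2(\mathcal{L}_Y\tau)_{0a}\sigma^{0a}+(\mathcal{L}_Y\tau)_{ab}\sigma^{ab}$.

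The key structural input is that $Y$ is tangent to the round spheres with components depending only on the angular variables (as the rotation vector fields $Y_i$ are), so that $\partial_0 Y^l=0$ and $\partial_j Y^0=0$. Plugging this into $(\mathcal{L}_Y\tau)_{ij}=Y^l\partial_l\tau_{ij}+\tau_{lj}\partial_i Y^l+\tau_{il}\partial_j Y^l$ collapses the $r$-direction entirely and gives $(\mathcal{L}_Y\tau)_{0a}=q(\mathcal{L}_{\tilde Y}\tilde\alpha)_a$ and $(\mathcal{L}_Y\tau)_{ab}=Q(\mathcal{L}_{\tilde Y}\tilde\tau)_{ab}$, where $\tilde Y=Y^a\partial_a$ denotes the induced field on $S^2$. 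Since a Lie derivative may be written with respect to any torsion-free connection, I would replace the angular partial derivatives by covariant derivatives of the round metric: $(\mathcal{L}_{\tilde Y}\tilde\alpha)_b=\tilde\alpha_{b;a}Y^a+\tilde\alpha_a Y^a_{;b}$ and $(\mathcal{L}_{\tilde Y}\tilde\tau)_{bc}=\tilde\tau_{bc;a}Y^a+\tilde\tau_{ab}Y^a_{;c}+\tilde\tau_{ac}Y^a_{;b}$ (the round Christoffel terms cancel by its symmetry). Substituting these, together with the expressions for $\sigma^{0a}$ and $\sigma^{ab}$, into the previous display and relabelling dummy indices produces exactly the claimed identity.

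I do not expect a genuine obstacle: the argument is essentially bookkeeping, and the only points that need care are keeping the powers of $r$ coming from $g^{ab}=r^{-2}\tilde g^{ab}$ straight, and checking that the round Christoffel symbols appearing when one passes from $\partial_a$ to $\tilde\nabla_a$ do indeed drop out -- which they must, since the assembled combination is a Lie derivative. As a consistency check one can instead contract everything directly in coordinates without ever invoking $\mathcal{L}_Y$; the two computations must agree.
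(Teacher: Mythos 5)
Your computation is correct and is exactly the ``direct computation'' the paper leaves to the reader: reducing the left side to $\tfrac12(\mathcal{L}_Y\tau)_{ij}\sigma^{ij}$, reading off components in spherical coordinates with $g^{ab}=r^{-2}\tilde g^{ab}$, and rewriting the angular Lie derivatives covariantly. You also correctly identify the implicit hypothesis that $Y^a$ is independent of $r$ (true for the rotation fields used later), without which an extra cross term $Q\,\tilde\tau_{ab}Y^a_{,0}\,\sigma^{0b}$ would appear.
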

\begin{proof}
Direct computation.
\end{proof}

We are finally in a position to prove the main results of this section.
\begin{thm} \label{thm:localpert}
Given any $\vec{\lambda}=(\lambda_1,\lambda_2,\lambda_3)$, there exist symmetric $(0,2)$ tensors $\sigma, \tau \in C^{\infty}_0(A_1)$ with $\sigma(x) = \sigma(-x)$ and  $\tau(x) = \tau(-x)$ satisfying
\begin{align}
	L \sigma &=0 \notag \\
	\sum_i \tau_{ij,i} & = 0, \quad \mbox{for } j = 1,2,3,\label{eq:tau1}
\end{align}
so that
\begin{align} \label{eq:angular}
	\int_{A_1}  \left[\frac{1}{2} \tau_{ij,l}(Y_k)^l +  \tau_{il} (Y_k)^l_{,j}\right] \sigma^{ij}\, d x = \lambda_k
\end{align}
for $k=1,2,3$ where $Y_k =  \frac{\partial}{\partial x^k}\times \vec{x}$ (cross product).
\end{thm}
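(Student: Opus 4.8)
The plan is to build $\sigma$ and $\tau$ from the ansätze \eqref{ansatz_tau} and \eqref{ansatz_sigma}, choosing the angular tensors on $S^2$ and the radial cutoffs so that the pairing in \eqref{eq:angular} is nontrivial, and then to reach an arbitrary $\vec\lambda$ by exploiting the $SO(3)$-equivariance and the linearity of the construction. \textbf{Construction and parity.} Fix functions $u,v$ on $S^2$ and radial functions $p,q\in C^{\infty}_0((1,2))$, and set $\tilde\tau=\tilde\delta^*(*du)$, $\tilde\sigma=\tilde\delta^*(*dv)$, which by Lemma \ref{div_eq} are trace-free with $\widetilde{\div}\widetilde{\div}\tilde\tau=0$. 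Define $\tau$ and $\sigma$ by \eqref{ansatz_tau} and \eqref{ansatz_sigma}, extended by zero across $|x|=1$ and $|x|=2$; since $p,q$ are compactly supported in $(1,2)$, $\sigma,\tau\in C^{\infty}_0(A_1)$, and by the discussion following those formulas $\div\tau=0$ (so \eqref{eq:tau1} holds) and $L\sigma=0$. For the parity I take $u$ and $v$ odd under the antipodal involution $\iota\colon S^2\to S^2$, $\iota(\omega)=-\omega$. Since $\iota$ is an orientation-reversing isometry of $S^2$, $\iota^*(*dw)=*dw$ whenever $w\circ\iota=-w$; as $\tilde\delta^*$ and $\widetilde{\div}$ commute with the isometry $\iota$ and $dr,\tilde g$ are $\iota$-invariant, it follows that $\tilde\sigma,\tilde\tau,\widetilde{\div}\tilde\sigma,\widetilde{\div}\tilde\tau$ are $\iota$-invariant, hence $\sigma(-x)=\sigma(x)$ and $\tau(-x)=\tau(x)$.

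\textbf{The pairing.} Writing $dx=r^2\,dr\,d\omega$ ($d\omega$ the area element of $(S^2,\tilde g)$), applying Lemma \ref{integral}, and noting $\tilde\alpha_{b;a}Y^a+\tilde\alpha_aY^a_{;b}=(\mathcal{L}_Y\tilde\alpha)_b$ and $\tilde\tau_{bc;a}Y^a+\tilde\tau_{ab}Y^a_{;c}+\tilde\tau_{ac}Y^a_{;b}=(\mathcal{L}_Y\tilde\tau)_{bc}$, one obtains
\[
\int_{A_1}\Big[\tfrac12\tau_{ij,l}(Y_k)^l+\tau_{il}(Y_k)^l_{,j}\Big]\sigma^{ij}\,dx = c_1\,I_k + c_2\,J_k ,
\]
where $c_1=\int_1^2 pq\,dr$, $c_2=\tfrac12\int_1^2 r^{-2}PQ\,dr$ (with $P=2r(rp)'$, $Q=(r^2q)'$), and, setting $\tilde\alpha=-\widetilde{\div}\tilde\tau$, $\tilde\eta=-\widetilde{\div}\tilde\sigma$,
\[
I_k=\int_{S^2}\langle\mathcal{L}_{Y_k}\tilde\alpha,\tilde\eta\rangle\,d\omega ,\qquad J_k=\int_{S^2}\langle\mathcal{L}_{Y_k}\tilde\tau,\tilde\sigma\rangle\,d\omega .
\]
I then pick $p,q\in C^{\infty}_0((1,2))$ with $c_1=1$ and $c_2=0$; this is possible because, for a fixed generic $q$, these are (after one integration by parts) two linearly independent linear conditions on $p$. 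With this choice the pairing equals $I_k$.

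\textbf{A nontrivial choice and surjectivity.} By Lemma \ref{div}, $\tilde\alpha=-\tfrac12*d(\Delta u+2u)$ and $\tilde\eta=-\tfrac12*d(\Delta v+2v)$; since the Hodge star is a pointwise isometry on one-forms and $\mathcal{L}_{Y_k}$ commutes with $d$ and $*$ (as $Y_k$ is Killing on $(S^2,\tilde g)$),
\[
I_k=\tfrac14\int_{S^2}\big\langle d\big(Y_k(\Delta+2)u\big),\,d\big((\Delta+2)v\big)\big\rangle\,d\omega .
\]
Taking $u=\sin^3\theta\cos 3\phi$ and $v=\sin^3\theta\sin 3\phi$ (both $\ell=3$ spherical harmonics, antipodally odd, with $\Delta u=-12u$, $\Delta v=-12v$) gives $I_k=300\int_{S^2}(Y_ku)v\,d\omega$, and since $Y_3u=\p_\phi u=-3v$ we get $I_3=-900\|v\|_{L^2}^2\neq0$; in particular $\sigma,\tau$ are nontrivial, and $\vec\lambda=(I_1,I_2,I_3)$ is a nonzero vector. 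Finally, the whole construction is $SO(3)$-equivariant: for $R\in SO(3)$, replacing $(u,v)$ by $(u\circ R^{-1},v\circ R^{-1})$ replaces $(\sigma,\tau)$ by $(R_*\sigma,R_*\tau)$, which still lies in $C^{\infty}_0(A_1)$, is antipodally symmetric, and satisfies $L(R_*\sigma)=0$, $\div(R_*\tau)=0$ (since $R$ is a Euclidean isometry and $L,\div$ are isometry-invariant), while the rotation fields $Y_k$ transform as a vector, so the left side of \eqref{eq:angular} is carried to the $R$-rotate of $\vec\lambda$; and $\sigma\mapsto t\sigma$, $t\in\R$, scales $\vec\lambda$ by $t$ and preserves all the constraints. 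Hence the set of attainable $\vec\lambda\in\R^3$ is nonempty, invariant under $SO(3)$ and real scaling, and contains a nonzero vector, so it is all of $\R^3$ (the value $\vec\lambda=0$ being attained by $\sigma=\tau=0$). This proves the theorem.

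\textbf{Main obstacle.} Everything except the choice of nontrivial angular data is bookkeeping on top of the lemmas already proved; the real point is to see that the pairing \eqref{eq:angular} is not identically zero, i.e.\ that the rotation fields $Y_k$ act nontrivially on the solution space of \eqref{eq_tau}. This is why the lowest harmonics fail: for $\ell=1$, $*du$ is a conformal Killing field of $S^2$ and $\tilde\tau\equiv 0$, while the required antipodal symmetry excludes the even harmonics, so one is led to the $\ell=3$ harmonics above. (A related subtlety: taking $v=u$ would give $I_k=\tfrac12\int_{S^2}Y_k\big(|\tilde\alpha|^2\big)\,d\omega=0$, so $u$ and $v$ must genuinely differ.)
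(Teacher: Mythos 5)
Your proof is correct, and its overall architecture coincides with the paper's: the ansätze \eqref{ansatz_tau}--\eqref{ansatz_sigma}, Lemmas \ref{div}, \ref{div_eq} and \ref{integral} to reduce \eqref{eq:angular} to radial integrals multiplying $S^2$ pairings, and the $SO(3)$-equivariance plus real scaling to pass from one nonzero value to an arbitrary $\vec\lambda$. Two substantive points differ. First, the paper makes the \emph{second} term of the Lemma \ref{integral} decomposition carry the moment: it takes $\tilde\sigma=\mathcal{L}_{Y_1}\tilde\tau$, so the angular factor becomes $\int_{S^2}|\mathcal{L}_{Y_1}\tilde\tau|^2$, manifestly positive as soon as $\tilde\tau$ is not axisymmetric. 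You instead kill that term ($c_2=0$), keep the first, and verify non-vanishing by an explicit computation with the $\ell=3$ harmonics $u=\sin^3\theta\cos3\phi$, $v=\sin^3\theta\sin3\phi$. Both routes work; the paper's avoids any explicit harmonic computation, while yours is completely explicit, and your remark that $v=u$ would force $I_k=0$ correctly identifies why two distinct harmonics are needed on your route. (Your "generic $q$" can even be upgraded: for \emph{any} nonzero $q\in C^\infty_0((1,2))$ the functionals $p\mapsto\int pq\,dr$ and $p\mapsto-\int p\,(r^2q''+3rq')\,dr$ are independent, since otherwise $q$ would be a compactly supported solution of a nondegenerate Euler equation.) Second, the parity: you take $u,v$ antipodally \emph{odd}, whereas the paper takes $u$ even. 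Your bookkeeping is the correct one here: the antipodal map reverses the orientation of $S^2$, so $\iota^*(*dw)=*dw$ exactly when $w\circ\iota=-w$, and only then are $\tilde\tau=\tilde\delta^*(*du)$ and $-\widetilde{\div}\tilde\tau$ invariant under $\iota$, which is what translates into $\tau_{ij}(x)=\tau_{ij}(-x)$ in Cartesian components; an even $u$ would instead produce $\tau(x)=-\tau(-x)$. This is a point where your version is more careful than the printed proof, and it is consistent with your observation that the even harmonics are excluded while $\ell=1$ degenerates, leading naturally to $\ell=3$.
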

\begin{rmk}
By a direct computation, the integrand in \eqref{eq:angular} equals  $\frac{1}{2} (\mathfrak{L}_{Y_k} \tau)_{ij} \sigma^{ij}$, where $\mathfrak{L}_{Y_k} \tau$ is the Lie derivative of $\tau$ along $Y_k$ on $\mathbb{R}^3$. Thus, if $\tau$ is axisymmetric with respect to $Y_k$, i.e. $\mathfrak{L}_{Y_k} \tau = 0$, then \eqref{eq:angular} is always zero.
\end{rmk}
\begin{proof}
We first show how to make the integral on the left of (\ref{eq:angular}) for $k=1$ nonzero. To simplify notation for this purpose we denote
$Y_1$ by $Y$. We choose $\tau$ of the form (\ref{ansatz_tau}) for some $\tilde{\tau}=\tilde{\delta}^**du$, where $u$ is an even function on $S^2$. By Lemma \ref{ansatz} and Lemma \ref{div_eq}, $\tau$ satisfies Equation (\ref{eq:tau1}) and has the desired symmetry $\tau(x)=\tau(-x)$.  We take $\sigma$ of the form (\ref{ansatz_sigma}) for a symmetric $(0,2)$ tensor $\tilde{\sigma}$ to be determined later. By Lemma \ref{integral}, the integral in question can be written as
\[
	\begin{split}
		&(\int_1^2 pq r^{-2} dr) \int_{S^2} (\tilde{\alpha}_{b;a}Y^a+\tilde{\alpha}_a Y^a_{;b})\tilde{\eta}_c\tilde{g}^{bc} dS^2\\
		&+(\int_1^2 \frac{1}{2}PQ r^{-4} dr) \int_{S^2} (\tilde{\tau}_{bc;a}Y^a+\tilde{\tau}_{ab} Y^a_{;c}+\tilde{\tau}_{ac}Y^a_{;b})\tilde{g}^{bd}\tilde{\sigma}_{de} \tilde{g}^{ec} dS^2,
	\end{split}
\]
for
$\tilde{\alpha}=-\widetilde{\div}\,\tilde{\tau}$, $\tilde{\eta}=-\widetilde{\div}\,\tilde{\sigma}$, $P=2r(rp)'$, and $Q=(r^2q)'$. We can choose $p$ and $q$ to be any compactly supported functions on the interval $(1, 2)$ so that $(\int_1^2 pq r^{-2} dr)$ and $(\int_1^2 \frac{1}{2}PQ r^{-4}dr)$ are arbitrary.  It suffices to choose $\tilde{\sigma}$ to make the following integral nonzero
\[
	\int_{S^2} (\tilde{\tau}_{bc;a}Y^a+\tilde{\tau}_{ab} Y^a_{;c}+\tilde{\tau}_{ac}Y^a_{;b})\tilde{g}^{bd}\tilde{\sigma}_{de} \tilde{g}^{ec} dS^2 \neq 0.
\]
To achieve this, we take
\[
	\tilde{\sigma}_{bc}=\tilde{\tau}_{bc;a}Y^a+\tilde{\tau}_{ab} Y^a_{;c}+\tilde{\tau}_{ac}Y^a_{;b}
\]
and define $\sigma$ by Equation (\ref{ansatz_sigma}). Since $Y$ is invariant under $x\mapsto -x$,
$\sigma$ defined in this way has the desired symmetry and satisfies $L\sigma=0$. It is not hard to check that $\tilde{\sigma} =\mathfrak{L}_Y\tilde{\tau}$, the Lie derivative of $\tilde{\tau}$ with respect to $Y$ on $S^2$. Thus, we can take an even function $u$ (e. g. the restriction of any homogeneous polynomial of even degree) so that $\tilde{\tau}_{bc;a}Y^a+\tilde{\tau}_{ab} Y^a_{;c}+\tilde{\tau}_{ac}Y^a_{;b}$ is nonzero.

To achieve the desired conclusion, we consider the linear functional $T_{(\sigma,\tau)}(v)$ given by the left-hand side of (\ref{eq:angular}) with vector field $Y=v\times\vec{x}$. Since this is a nonzero linear functional we may choose a positively oriented orthonormal basis $\{e_1,e_2,e_3\}$ so that the vector $(T_{(\sigma,\tau)}(e_1),T_{(\sigma,\tau)}(e_2),T_{(\sigma,\tau)}(e_3))$ is proportional to $\vec{\lambda}$, and after multiplication of $\tau$ by a constant we may assume the vector is equal to $\vec{\lambda}$. It follows that there is a rotation $R$ of $\mathbb{R}^3$ so that $T_{(\sigma,\tau)}(R(\frac{\partial}{\partial x^k}))=\lambda_k$ for $k=1,2,3$. It follows that (\ref{eq:angular}) holds for the pair $((R^{-1})^*(\sigma),(R^{-1})^*(\tau))$ since we clearly have $T_{(S^*\sigma,S^*\tau)}=T_{(\sigma,\tau)}\circ S^{-1}$ for any rotation $S$.

\end{proof}

We will need a corresponding result which will be used to specify the center of mass. This involves
the construction of solutions of $L\sigma=0$ satisfying a moment condition.
\begin{thm} \label{thm:local_cm}
Given any $\vec{\beta}=(\beta_1,\beta_2,\beta_3)\in\mathbb{R}^3$, there exist a trace-free and divergence-free symmetric $(0,2)$ tensor $\sigma \in C^{\infty}_0 (A_1)$ satisfying $L \sigma=0$ so that
\begin{align} \label{eq:cm}
	\int_{A_1} x^p \sum_{i,j,k}(\sigma_{ij,k})^2\, d x = \beta_p
\end{align}
for $p=1,2,3$.
\end{thm}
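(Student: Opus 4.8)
The plan is to mimic the structure of the proof of Theorem \ref{thm:localpert}, first producing a single tensor $\sigma$ for which one of the three moments $\int_{A_1} x^p \sum_{i,j,k}(\sigma_{ij,k})^2\,dx$ is nonzero, and then using a rotation argument to hit an arbitrary vector $\vec\beta$. Since the integrand $\sum_{i,j,k}(\sigma_{ij,k})^2$ is a nonnegative quantity, its integral against $x^p$ is exactly the $p$-th coordinate of the ``center of mass'' of the density $|\n\sigma|^2$ supported in $A_1$; so the real content is just to arrange that this density is not distributed symmetrically about the origin, i.e.\ that $|\n\sigma|^2$ is not an even function of $x$.

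First I would take $\sigma$ of the ansatz form (\ref{ansatz_sigma}), namely $\sigma = -2p\,\widetilde{\div}\tilde\sigma\odot dr + 2r(rp)'\tilde\sigma$ for a compactly supported radial function $p$ on $(1,2)$ and a fixed trace-free symmetric $(0,2)$ tensor $\tilde\sigma$ on $S^2$; by the discussion following Lemma \ref{ansatz} this automatically satisfies $L\sigma=0$, is trace-free, and is divergence-free. The point is that $\sum_{i,j,k}(\sigma_{ij,k})^2$ expands, after separating the radial and spherical factors, into a finite sum of terms of the form (function of $r$) times (function on $S^2$ built algebraically from $\tilde\sigma$, $\widetilde{\div}\tilde\sigma$, and their covariant derivatives). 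Integrating $x^p = r\,\omega^p$ against this over $A_1$ factors as $\bigl(\int_1^2 (\text{radial weight})\,r^3\,dr\bigr)$ times $\bigl(\int_{S^2}\omega^p\,(\text{spherical density})\,dS^2\bigr)$. The radial integrals are nonzero and essentially freely adjustable by the choice of $p$, so it suffices to choose $\tilde\sigma$ making at least one of the three spherical integrals $\int_{S^2}\omega^p\,(\text{spherical density})\,dS^2$ nonzero. For this, I would pick $\tilde\sigma$ so that the resulting density on $S^2$ is \emph{not} invariant under the antipodal map $\omega\mapsto-\omega$ — e.g.\ a tensor concentrated near one pole — which forces some odd-degree spherical harmonic, and hence some $\int_{S^2}\omega^p(\cdots)\neq 0$. (Equivalently one can argue by contradiction: if all three spherical integrals vanished for every admissible choice of radial data, the density would have to be antipodally even for generic $\tilde\sigma$, which a one-parameter deformation of $\tilde\sigma$ easily defeats.) Note that, unlike Theorem \ref{thm:localpert}, here we do \emph{not} want $\sigma$ to be even under $x\mapsto-x$; indeed evenness would make the integrand even and kill the moment, so $\sigma$ here genuinely breaks the Regge--Teitelboim-type symmetry, consistent with center of mass being the symmetry-odd analogue of angular momentum.

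Having produced $\sigma_0\in C^\infty_0(A_1)$ with $L\sigma_0=0$, trace-free and divergence-free, and with $\bigl(\int_{A_1} x^1 |\n\sigma_0|^2\,dx,\ \int_{A_1} x^2|\n\sigma_0|^2\,dx,\ \int_{A_1} x^3|\n\sigma_0|^2\,dx\bigr)=:\vec m\neq 0$, I would finish by rotation and scaling exactly as in Theorem \ref{thm:localpert}: choose a rotation $R$ of $\R^3$ carrying $\vec m/|\vec m|$ to $\vec\beta/|\vec\beta|$, set $\sigma := c\,(R^{-1})^*\sigma_0$ for a suitable constant $c>0$, and use that both the equation $L\sigma=0$ and the support condition $A_1$ are $O(3)$-invariant, while $\sum_{i,j,k}(\sigma_{ij,k})^2\,dx$ transforms as a scalar density so its moment vector transforms by $R$ and scales by $c$. (If $\vec\beta = 0$ one can instead simply take $\sigma\equiv 0$, or note the construction below already covers all of $\R^3$ once scaling by arbitrary $c\ge 0$ and all rotations are allowed — though hitting $\vec\beta=0$ with $|\vec m|\neq 0$ requires the trivial $\sigma$.) The main obstacle is the middle step: verifying that the spherical density arising from $|\n\sigma_0|^2$ is not antipodally symmetric for some explicit choice of $\tilde\sigma$, since this requires actually unwinding the ansatz and computing enough of the $S^2$-integrals to exhibit a nonzero odd harmonic component; everything else (radial freedom, the rotation/scaling argument) is routine and parallels the previous theorem.
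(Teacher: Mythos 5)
There is a genuine gap, and it occurs at the very first step. You take $\sigma$ of the form \eqref{ansatz_sigma} and assert that it is ``automatically \dots divergence-free.'' It is not: by Lemma \ref{ansatz}, divergence-freeness of $2p\tilde{\eta}\odot dr+P\tilde{\sigma}$ requires $\widetilde{\div}\tilde{\eta}=0$ and $(r^2p)'\tilde{\eta}+P\,\widetilde{\div}\tilde{\sigma}=0$, and for \eqref{ansatz_sigma} one has $\tilde{\eta}=-\widetilde{\div}\tilde{\sigma}$ and $P=2r(rp)'$, so the second condition reads $\bigl(2r(rp)'-(r^2p)'\bigr)\widetilde{\div}\tilde{\sigma}=r^2p'\,\widetilde{\div}\tilde{\sigma}=0$, which fails for nonconstant $p$ unless $\widetilde{\div}\tilde{\sigma}\equiv 0$. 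The ansatz \eqref{ansatz_sigma} only guarantees $L\sigma=0$. Since the theorem explicitly demands $\div\sigma=0$ --- and this is used downstream in the proof of Theorem \ref{thm:ctm}, where the terms of $\mathcal{E}_{\og}$ involving $\og_{jl,j}$ are discarded precisely because $\sigma^k$ is trace- and divergence-free --- your construction does not produce an admissible $\sigma$. The correct route, and the one the paper takes, is to use the \emph{other} ansatz \eqref{ansatz_tau} with $\tilde{\tau}=\tilde{\delta}^*(*du)$ as in Lemma \ref{div_eq}: this gives a trace-free, divergence-free tensor, and then $L\sigma=0$ comes for free since $L\sigma=\div\,\div\sigma-\Delta(\mathrm{tr}\,\sigma)$.

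Second, the step you yourself flag as ``the main obstacle'' --- exhibiting a nonzero moment --- is left incomplete, and the route you sketch (separating variables and hunting for a nonzero odd spherical harmonic in the angular density) is far harder than necessary. You already observed the key fact that the integrand $\sum_{i,j,k}(\sigma_{ij,k})^2$ is nonnegative, but you did not exploit it. The paper does: choose the seed function $u$ supported in the first octant, so that $\sigma$ is supported where $x^1>0$ and
\[
\int_{A_1}x^1\sum_{i,j,k}(\sigma_{ij,k})^2\,dx>0
\]
with no computation at all. Your concluding rotation-and-scaling argument agrees with the paper's and is fine, but as written the proposal neither produces a divergence-free $\sigma$ nor completes the non-vanishing step.
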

\begin{proof} We first show how to make the integral on the left of (\ref{eq:cm}) nonzero
for $p=1$. By starting with a nonzero function $u$ supported in the first octant, we find from Lemma \ref{div_eq} and \eqref{ansatz_tau} a nonzero trace-free symmetric (0,2) tensor $\sigma$ in $C^{\infty}_0(A_1)$ satisfying $\div (\sigma) =0$. Then, in particular $L \sigma =0$.  Because $\sigma$ is supported in the first octant, this implies
\[
	\int_{A_1} x^1 \sum_{i,j,k}(\sigma_{ij,k})^2\, d x>0.
\]


We then let $T_\sigma$ be the linear functional on $\mathbb{R}^3$ given by
\[ T_\sigma(v)=\int_{A_1} x\cdot v \sum_{i,j,k}(\sigma_{ij,k})^2\, d x.
\]
Since $T_\sigma$ is nonzero, there is a positively oriented orthonormal basis $\{e_1,e_2,e_3\}$  for which the vector defined by $(T_\sigma(e_1),T_\sigma(e_2),T_\sigma(e_3))$ is proportional to $\vec{\beta}$. Replacing
$\sigma$ with a scalar multiple we may assume that $T_\sigma(e_p)=\beta_p$ for $p=1,2,3$.
Thus there is a rotation $R$ with $R(\frac{\partial}{\partial x^p})=e_p$ so that
$T_\sigma(R(\frac{\partial}{\partial x^p}))=\beta_p$ for $p=1,2,3$.
Since for a rotation $S$ we have $T_{S^*\sigma}=T_\sigma\circ S^{-1}$ it follows that
$(R^{-1})^*\sigma$ satisfies the required condition (\ref{eq:cm}).
\end{proof}


\section{Specifying the angular momentum} \label{sec:am}
We first state a general analytic result which constructs new initial data sets from given ones. Let $L \sigma := \sum_{i,j} (\sigma_{ij,ij} - \sigma_{ii,jj} )$ be the linearized (at the Euclidean metric) scalar curvature map which goes from smooth symmetric $(0,2)$ tensors to smooth functions. Denote by $W^{k,p}_{-q}$ the weighted Sobolev spaces defined as follows. We say $ f \in W^{k,p}_{-q}$ if
$$
		\| f\|_{W^{k,p}_{-q}} := \left( \int_M \sum_{|\alpha| \le k} \left( \big| D^{\alpha} f \big| \rho ^{ | \alpha| + q }\right)^p \rho^{-3} \, d\textup{vol}_g  \right)^{\frac{1}{p}} < \infty,
$$
where $\alpha$ is a multi-index and $\rho$ is a continuous function with $\rho = |x|$ on the region where the asymptotically flat coordinate system $\{x^i\}$ is defined. When $p = \infty$,
$$
		\| f\|_{W^{k,\infty}_{-q}} = \sum_{|\alpha | \le k} ess \sup_{M } | D^{\alpha} f | \rho^{ |\alpha| + q}.
$$
We assume $k=1$ or $2$, $q\in (1/2, 1)$, and $p>3$. By our assumption $(g, \pi) \in W^{2,p}_{-q} \times W^{1,p}_{-1-q}$.

 In the following, the notation $f= O(r^{-a})$ means that $|f|\le Cr^{-a}$, $|\partial f| \le C r^{-a-1}$ for some constant $C$ independent of $k$ and the analogous conditions on successive derivatives as needed. We remark that the following results would hold similarly for any $p>3/2$, but the ``$O$''-notation below would mean the decay in weighted Sobolev norms. Here we assume $p>3$ because the weighted Sobolev norms can be replaced by pointwise estimates using the Sobolev  imbedding theorem.

Let $r:= |x|$. Denote the shell by $A_k = \{ k< r < 2k\}$ and denote by $C^{\infty}_0 (A_k)$ the set of functions or tensors which are compactly supported in $A_k$.

\begin{prop} \label{prop:local}
Let the symmetric $(0,2)$ tensors $\sigma, \tau \in C^{\infty}_0(A_1)$ satisfy the linearized constraint equations
\begin{align*}
		L \sigma &= 0 \\
		\sum_i \tau_{ij,i} &= 0 \quad \mbox{for } j = 1,2,3. \notag
\end{align*}
Define $\sigma^k, \tau^k \in C^{\infty}_0(A_k)$ by
\begin{align*} 
		\sigma^k = k^{-1} \sigma(x/k), \quad \mbox{and } \tau^k = k^{-2} \tau(x/k).
\end{align*}
Then given any vacuum initial data set $(M, g, \pi)$ with decay rate $g-\delta=O(r^{-1})$,
$\pi=O(r^{-2})$, and any fixed $q\in (1/2, 1)$, there exists a sequence of vacuum initial data sets $\{(\og^k, \op^k)\}$ so that for $k$ large and  outside a fixed compact set (independent of $k$),
\begin{align}
	\og^k_{ij} &= \left( 1 +  \frac{A^k}{r}\right) g_{ij}+ \sigma^k_{ij} + O(r^{-2q}) \label{eq:gasym}\\
	\op^k_{ij} &= \pi_{ij} + \tau^k_{ij} + \frac{1}{r^3}\left[ -B^k_i x_j - B^k_j x_i + \sum_l B^k_lx_l \delta_{ij} \right] + O(r^{-1-2q}), \label{eq:piasym}
\end{align}
where $A^k$ and $(B^k_1, B^k_2, B^k_3)$ are constants.  The initial data sets $\{(\og^k, \op^k)\}$ are small perturbations of $(g,\pi)$ in the weighted Sobolev spaces; in fact,
\[
	\| g - \og^k \|_{W^{2,p}_{-q}} \rightarrow 0, \quad \| \pi - \op^k \|_{W^{1,p}_{-1-q}} \rightarrow 0, \quad \mbox{ as } k \rightarrow \infty.
\]
Moreover,
\begin{align} \label{eq:masslm}
	\overline{E}_k \rightarrow E \quad \mbox{and} \quad \overline{{\bf P}}_k \rightarrow {\bf P} \qquad \mbox{as } k\rightarrow \infty.
\end{align}
\end{prop}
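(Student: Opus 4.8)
The plan is to correct the explicit approximate data $(\hg^{k},\hp^{k}):=(g+\sigma^{k},\,\pi+\tau^{k})$ to an exact vacuum initial data set by a deformation argument in the weighted spaces, in the spirit of Corvino--Schoen and Chru\'{s}ciel--Delay, while tracking the precise asymptotics introduced by the correction. First I would estimate the constraint error. The rescaled tensors still solve the linearized (at $(\delta,0)$) constraints, $L\sigma^{k}=0$ and $\div\tau^{k}=0$, so Taylor expanding the constraint map about the flat data,
\[
\Phi(\hg^{k},\hp^{k})=\Phi(g,\pi)+D\Phi|_{(\delta,0)}(\sigma^{k},\tau^{k})+\big(D\Phi|_{(g,\pi)}-D\Phi|_{(\delta,0)}\big)(\sigma^{k},\tau^{k})+\mathcal{Q}_{k},
\]
where $\mathcal{Q}_{k}$ is at least quadratic in $(\sigma^{k},\tau^{k})$ and their derivatives; the first term vanishes because $(g,\pi)$ is vacuum, and the second vanishes because $D\Phi|_{(\delta,0)}(\sigma^{k},\tau^{k})=(L\sigma^{k},\div\tau^{k})=0$. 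Hence the error $\mE^{k}:=\Phi(\hg^{k},\hp^{k})$ is a sum of cross terms (products of $g-\delta$ or $\pi$ with derivatives of $\sigma^{k},\tau^{k}$) and terms at least quadratic in $\sigma^{k},\tau^{k}$, all supported in $A_{k}$. On $A_{k}$ the scaling gives $|\partial^{\ell}\sigma^{k}|\le Ck^{-1-\ell}$ and $|\partial^{\ell}\tau^{k}|\le Ck^{-2-\ell}$, while $g-\delta=O(k^{-1})$, $\partial(g-\delta)=O(k^{-2})$ and $\pi=O(k^{-2})$, so each term is $O(k^{-4})$ pointwise on $A_{k}$; since $r\sim k$ and $|A_{k}|\sim k^{3}$ this yields $\|\mE^{k}\|_{W^{0,p}_{-2-q}}\le Ck^{-2+q}\to 0$ for both components.

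Next I would solve $\Phi(\hg^{k}+h,\hp^{k}+w)=0$ for a small correction $(h^{k},w^{k})$ by an implicit function / Picard argument in $W^{2,p}_{-q}\times W^{1,p}_{-1-q}$. Near infinity $D\Phi|_{(g,\pi)}$ is a small perturbation of $(h,w)\mapsto(Lh,\div w)$, and since $LL^{*}$ is a constant times the bi-Laplacian $\Delta^{2}$ while $\div$ of a symmetric derivative of a vector field is second order elliptic, for $q\in(1/2,1)$ these operators are surjective onto $W^{0,p}_{-2-q}$, the solutions being permitted to carry the $r^{-1}$ (metric) and $r^{-2}$ (momentum) tails that a compactly supported source produces. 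The iteration converges geometrically (replacing $D\Phi|_{(\delta,0)}$ by $D\Phi|_{(g,\pi)}$ costs only a small factor where the correction lives, and $\mathcal{Q}_{k}$ is quadratic), and yields an exact vacuum solution $(\og^{k},\op^{k})=(\hg^{k}+h^{k},\hp^{k}+w^{k})$ with $\|h^{k}\|_{W^{2,p}_{-q}}+\|w^{k}\|_{W^{1,p}_{-1-q}}\le C\|\mE^{k}\|\to 0$. Because $\mE^{k}$ is compactly supported, inspecting the linear step shows $h^{k}$ has leading asymptotics governed by the $\R^{3}$ fundamental solution of $\Delta^{2}$, equal to $(A^{k}/r)\delta_{ij}+O(r^{-2})$ after a coordinate change at infinity absorbing the accompanying $x_{i}x_{j}/r^{3}$ term into the form $(A^{k}/r)g_{ij}$, and likewise $w^{k}=\tfrac{1}{r^{3}}\big[-B^{k}_{i}x_{j}-B^{k}_{j}x_{i}+\sum_{l}B^{k}_{l}x_{l}\delta_{ij}\big]+O(r^{-1-2q})$, where $A^{k}$ and $B^{k}=(B^{k}_{1},B^{k}_{2},B^{k}_{3})$ are fixed multiples of the zeroth moments $\int_{A_{k}}\mE^{k}$ and hence tend to $0$. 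Together with $\hg^{k}=g+\sigma^{k}$, $\hp^{k}=\pi+\tau^{k}$ this gives \eqref{eq:gasym}--\eqref{eq:piasym} outside a fixed compact set, and the weighted Sobolev convergence follows since $\|\sigma^{k}\|_{W^{2,p}_{-q}},\|\tau^{k}\|_{W^{1,p}_{-1-q}}\to 0$ by the scaling together with $q<1$.

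For the energy and linear momentum: since $\sigma^{k},\tau^{k}$ are supported in $\{k<r<2k\}$ they contribute nothing to the surface integrals at infinity defining $\overline E_{k}$ and $\overline{\bf P}_{k}$, and the $O(r^{-2})$ and $O(r^{-1-2q})$ remainders drop out of the limit (here $q>1/2$ is used for the momentum integral); computing the contributions of the explicit tails gives $\overline E_{k}-E$ proportional to $A^{k}$ and $\overline{\bf P}^{k}_{i}-{\bf P}_{i}$ proportional to $B^{k}_{i}$, so \eqref{eq:masslm} holds.

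The hard part will be the correction step: establishing the right inverse of $D\Phi|_{(g,\pi)}$ in these weighted spaces (and dealing with any Killing initial data of the background $(g,\pi)$, e.g.\ by solving in the asymptotic region and gluing to the given data inside), running the Picard iteration with quantitative control, and---most delicately---extracting asymptotics of the correction precise enough to read off the coefficients $A^{k}$, $B^{k}$ as moments of $\mE^{k}$ and thereby conclude that they vanish in the limit. Reconciling the weighted-Sobolev estimates with the pointwise asymptotic expansions in \eqref{eq:gasym}--\eqref{eq:piasym} is the technical heart of the argument.
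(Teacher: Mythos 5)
Your overall strategy---form the approximate data $(g+\sigma^k,\pi+\tau^k)$, check that the constraint error is $O(k^{-4})$ and supported in $A_k$ (hence small in $W^{0,p}_{-2-q}$), and remove it using solvability of the linearized constraints in weighted spaces---is exactly the paper's, and your error estimate agrees with theirs. Where you diverge is in how the correction is parametrized, and this is not a cosmetic difference. The paper does not run a direct deformation $(\hg^k+h,\hp^k+w)$ with a right inverse built from $L^*$ and $\Delta^2$; it invokes the proof of Corvino--Schoen's Theorem 1, writing $\og^k=(u^k)^4\hg^k+h^k$ and $\op^k=(u^k)^2(\hp^k+\mathcal{L}_{\hg}{\bf X}^k)+w^k$ with $(h^k,w^k)$ compactly supported. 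This conformal/vector-field ansatz is what makes the asymptotics \eqref{eq:gasym}--\eqref{eq:piasym} essentially free: outside a compact set the solution is $(u^k)^4\hg^k$ and $(u^k)^2(\hp^k+\mathcal{L}_{\hg}{\bf X}^k)$, and since $\Delta u^k=O(r^{-2-2q})$ and $\Delta({\bf X}^k)_i=O(r^{-2-2q})$ one reads off $u^k=1+A^k/(4r)+O(r^{-2q})$ and $({\bf X}^k)_i=B^k_i/r+O(r^{-2q})$; the coefficient $A^k/r$ then multiplies $g_{ij}$ (not $\delta_{ij}$), and the $B^k$-tensor in \eqref{eq:piasym} is literally $\mathcal{L}(B^k/r)$ to leading order.

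In your route, the step you defer as ``the hard part'' is genuinely delicate and is precisely what the ansatz avoids. If $h^k=L^*u$ with $\Delta^2 u$ equal to a multiple of the compactly supported error, the leading term of $h^k$ is proportional to $\mathrm{Hess}(|x|)-\Delta(|x|)\,\delta=-\left(\delta_{ij}/r+x_ix_j/r^3\right)$, not $(A^k/r)\delta_{ij}$; the coordinate change you propose to absorb the $x_ix_j/r^3$ piece also acts on $g$ and $\sigma^k$, and one must verify that the induced errors land in $O(r^{-2q})$ and preserve the parity properties used in the later sections. Similarly, a generic right inverse of the divergence operator need not produce the specific three-parameter family appearing in \eqref{eq:piasym} (the space of homogeneous degree $-2$ divergence-free symmetric tensors is larger than that family); you obtain the stated form only if you commit to corrections of the form $w=\mathcal{L}V$ for a vector field $V$. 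So your plan is workable---it is essentially the Chru\'{s}ciel--Delay route rather than the Corvino--Schoen one---but the conformal-method citation is what lets the paper dispatch the asymptotic expansions in three lines, and as written your proposal leaves exactly that step unproved.
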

\begin{pf}
Let $\hg^k = g+ \sigma^k$ and $\hp^k = \pi + \tau^k$. Then $(\hg^k, \hp^k)$ satisfies the constraint equations $\Phi(\hg^k, \hp^k ) = (0,0)$ everywhere in $M \setminus A_k$ and $\Phi (\hg^k, \hp^k) = (O(r^{-4}), O(r^{-4}))$ in $A_k$. We denote
\[
	(\mathcal{L}_g {\bf X})_{ij} = {\bf X}_{i;j} + {\bf X}_{j;i} - (\div_g {\bf X})g_{ij}
\]
for any vector field $X$ and metric $g$. By the proof of \cite[Theorem 1]{CS06}, there exist $(u^k, {\bf X}^k)$ on $M$ and $(h^k, w^k)$ with compact supports (uniformly in $k$) such that
\[
	\og^k = (u^k)^4 \hg^k + h^k, \quad \mbox{and } \quad \op^k= (u^k)^2(\hp^k + \mathcal{L}_{\hg}{ \bf X}^k) + w^k
\]
satisfy $\Phi(\og^k, \op^k) = 0$ for all $k$ large, and
\begin{align} \label{eq:decay}
	\| u^k - 1 \|_{W^{2,p}_{-q}} \rightarrow 0, \quad \| {\bf X}^k \|_{W^{2,p}_{-q}}\rightarrow 0,\quad \| h^k \|_{W^{2,p}_{-q}} \rightarrow 0, \quad \| w^k \|_{W^{1,p}_{-q}} \rightarrow 0,
\end{align}
as $k\rightarrow \infty$. The constraint equations imply $\Delta u^k = O(r^{-2-2q})$ and $\Delta ({\bf X}^k)_i = O(r^{-2-2q})$. It follows that
\begin{align*}
			u^k = 1 + \frac{A^k}{4r} + O(r^{-2q}), \quad \mbox{and} \quad ({\bf X}^k)_i = \frac{B^k_i}{r} + O(r^{-2q}).
\end{align*}
Therefore, \eqref{eq:gasym} and \eqref{eq:piasym} follow, and the convergence of \eqref{eq:masslm} can be derived as in \cite{CS06}.
\qed
\end{pf}

For the rest of the section, we consider the special case of Proposition \ref{prop:local} when $g = v^4 \delta$ outside a compact set so we have $g = \left(1 + \frac{2 E}{r} \right)\delta_{ij} + O(r^{-2})$.  We also assume that $\pi =O(r^{-2})$, $\pi(x)+\pi(-x)=O(r^{-1-2q})$. 
That any initial data can be approximated by such data follows from \cite{CS06}. Note that the asymptotic oddness condition is the Regge--Teitelboim condition  \cite{RT,BO} required for the existence of ${\bf J}$. Therefore, near infinity, $(\og^k, \op^k)$ satisfy the conditions
\begin{align}
	\og^k_{ij} &= (u^k)^4(g_{ij}+\sigma^k_{ij}) =\left( 1 + \frac{2E+A^k}{r} \right)\delta_{ij} + \sigma^k_{ij} + O(r^{-2q}) \label{eq:gasym2}\\
	\op^k_{ij} &= (u^k)^2 (\pi+\tau^k+\mathcal{L}_{\hg}{\bf X}^k)_{ij} \notag\\
	&= \pi_{ij}+\tau^k_{ij} + \frac{1}{r^3}\left[ -B^k_i x_j - B^k_j x_i + \sum_l B^k_lx_l \delta_{ij} \right] + O(r^{-1-2q}). \label{eq:piasym2}
\end{align}

\begin{pfam}
Let $(g, \pi)$ be a vacuum initial data set satisfying the above conditions. Choose $\sigma$ and $\tau$ satisfying the assumptions in Theorem \ref{thm:localpert} with $\vec{\lambda}=-8\pi\vec{\alpha}$. There exist vacuum initial data sets $(\og^k, \op^k)$ satisfying \eqref{eq:gasym2} and \eqref{eq:piasym2} by Proposition \ref{prop:local}, and $|\overline{E_k} - E|\le \epsilon$ and $|\overline{\bf P}_k - {\bf P} | \le \epsilon$. It remains to prove the desired properties of the center of mass and angular momentum. In the following, we suppress the superscript $k$ of $\og^k$ and $\op^k$ whenever it is clear from the context.

We assume that $\vec{\alpha}=(\alpha_1,\alpha_2,\alpha_3)$. We denote by $Y_p$ the Euclidean Killing
vector field $Y_p=\frac{\partial}{\partial x^p}\times\vec{x}$ for $p=1,2,3$. Because of the asymptotics of $(\og, \op)$,
\begin{align*}
	\overline{E} \, \overline{{\bf J}}_p &= \frac{1}{8\pi} \lim_{\rho\rightarrow \infty} \int_{r = \rho} \sum_{i,j} \op_{ij}(Y_p)^i \frac{x_j}{r} \, d\sigma_0\\
	&=  \frac{1}{8\pi} \lim_{\rho\rightarrow \infty} \int_{r = \rho} \op_{ij}(Y_p)^i \nu^j \, d\sigma_{\og},
\end{align*}
where $\nu$ and $d\sigma_{\og}$ are respectively the outward unit normal vector and the area measure of $\{r = \rho\}$ with respect to $\og$. To shorten notation we do the following estimates using $Y$ in place of $Y_p$. By the divergence theorem, assuming $\rho_0 \ll k$ and $2k < \rho$,
\begin{align*}
	\int_{r = \rho} \op_{ij}Y^i \nu^j \, d\sigma_{\og} = \int_{ \rho_0 \le r \le \rho} g^{lj} (\op_{il}Y^i)_{;j} \, d\textup{vol}_{\og} + \int_{r = \rho_0} \op_{ij}Y^i \nu^j \, d\sigma_{\og}.
\end{align*}
Because $\tau^k$ vanishes on $\{ r = \rho_0\}$,
\begin{align*}
		 \int_{r = \rho_0}\op_{ij}Y^i \nu^j \, d\sigma_{\og} = \int_{r=\rho_0} \sum_{i,j} (u^k)^2 (\pi+\mathcal{L}_{\hg} {\bf X}^k)_{ij} Y^i \nu^j \, d\sigma_{\og}.
\end{align*}
Although the absolute value of the last integrand is $O(\rho_0^{-1})$ which would not have a finite limit, the asymptotic symmetry conditions say that $\pi$ is odd and the term $\mathcal{L}_{\hg} {\bf X}^k$ is also asymptotically odd,  and hence the leading order term of the integrand is odd causing the limit to be finite. In fact, we have for $k$ large enough,
\begin{align}
	 &\int_{r=\rho_0} \sum_{i,j} (u^k)^2 (\pi+\mathcal{L}_{\hg} {\bf X}^k)_{ij} Y^i \nu^j \, d\sigma_{\og}  \notag\\
	 &=\int_{r=\rho_0} \sum_{i,j} \pi_{ij} Y^i \nu^j \, d\sigma_{\og} +\int_{r=\rho_0} \sum_{i,j} (\mathcal{L}_{\hg} {\bf X}^k)_{ij} Y^i \nu^j \, d\sigma_{\og}\label{eq:1}\\
	 &\quad +  \int_{r=\rho_0} \sum_{i,j} [(u^k)^2-1] (\pi+\mathcal{L}_{\hg} {\bf X}^k)_{ij} Y^i \nu^j \, d\sigma_{\og} \label{eq:2}\\
	 &=8\pi E {\bf J}_p+ O(\rho_0^{1-2q}).\notag
\end{align}
Clearly, the first integral in \eqref{eq:1} is $8\pi E {\bf J}_p+ O(\rho_0^{-1})$. For the second integral in \eqref{eq:1}, we use \eqref{eq:decay} and choose $k$ large so that $|\mathcal{L}_{\hg} {\bf X}^k|$ is small, say less than $\rho_0^{-4}$. The integral in \eqref{eq:2} is $O(\rho_0^{1-2q})$ by \eqref{eq:gasym2}, \eqref{eq:piasym2}, and the asymptotic symmetry of $\pi$.

To estimate the interior integral, by the constraint equation $\og^{lj}\op_{il;j}= 0$ and the condition that $Y$ is a Euclidean Killing vector field,
\begin{align} \label{eq:pointwise}
		\og^{lj}(\op_{il}Y^i)_{;j} = (\og^{lj} -\delta^{lj}) \op_{il} Y^i_{,j} + \og^{lj} \op_{il} Y^s \overline{\Gamma}_{js}^i.
\end{align}
By \eqref{eq:gasym2}, \eqref{eq:piasym2}, and $\sigma^k(x) = \sigma^k(-x)$, the integral of the first term on the right-hand side is
\begin{align*}
	 \int_{ \rho_0 \le r \le \rho}  (\og^{lj} -\delta^{lj}) \op_{il} Y^i_{,j} \, d \textup{vol}_{\og} &= - \int_{ A_k} \sum_{i,j, l} \sigma^k_{ij} \tau^k_{il}Y^l_{,j} \, dx + O(\rho_0^{1-2q}) \\
	& = - \int_{ A_1} \sum_{i,j, l} \sigma_{ij} \tau_{il}Y^l_{,j} \, dx + O(\rho_0^{1-2q}),
\end{align*}
where we use $\pi(x) + \pi(-x) = O(r^{-1-2q})$ and $\sigma^k(x) = \sigma^k(-x)$ to estimate the error terms. For example, for some of the error terms,
\begin{align*}
	& \int_{\rho_0 \le r \le \rho} - \frac{2 E A^k}{r} \sum_{i,j,l} \delta_{lj} \pi_{il} Y^i_{,j} \, d\mbox{vol}_{\og} = \int_{\rho_0}^{\rho} O(r^{-2-2q}) \, r^2 dr = O(\rho_0^{1-2q}),\\
	& \int_{\rho_0 \le r \le \rho} - \sum_{i,j,l}\sigma^k_{lj} \pi_{il} Y^i_{,j} \, d\mbox{vol}_{\og} =  \int_{k}^{2k} O(k^{-1} r^{-1-2q}) \, r^2 dr = O(k^{1-2q}) = O(\rho_0^{1-2q}).
\end{align*}
To estimate the integral of the second term on the right hand-side of \eqref{eq:pointwise}, we again use \eqref{eq:gasym2}, \eqref{eq:piasym2} and asymptotic symmetry to derive the first equality.
\begin{align*}
	&\int_{ \rho_0 \le r \le \rho}  \og^{jl} \op_{il} Y^s \overline{\Gamma}_{js}^i \, d\textup{vol}_{\og} = \frac{1}{2} \int_{A_k} \sum_{i,j,l} \tau^k_{ij}Y^l \sigma^k_{ij,l} \, dx + O(\rho_0^{1-2q})\\
	&= - \frac{1}{2} \int_{A_k} \sum_{i,j,l} \sigma^k_{ij}  \tau^k_{ij,l}Y^l \, dx + O(\rho_0^{1-2q}) = - \frac{1}{2} \int_{A_1} \sum_{i,j,l} \sigma_{ij}  \tau_{ij,l}Y^l \, dx + O(\rho_0^{1-2q}),
\end{align*}
where in the second-to-last identity, we integrate by parts and use the fact that $\sigma^k$ and $\tau^k$ vanish on the boundary and that $Y$ is divergence-free.

Combining the above identities, we derive
\[
		8\pi \overline{E} \, \overline{{\bf J}}_p = 8\pi E {\bf J}_p-\int_{A_1} \sum_{i,j,l} \left[\frac{1}{2}\tau_{ij,l} (Y_p)^l +  \tau_{il} (Y_p)^l_{,j}\right] \sigma_{ij}\, d x + O(\rho_0^{1-2q}).
\]
Then we choose $\rho_0$ large so that the error term is less than $\epsilon$. For $k \gg \rho_0$ large enough, we prove that $ (\og^k, \op^k)$ satisfies \eqref{eq:amineq} from Theorem \ref{thm:localpert} applied
with $\vec{\lambda}=-8\pi\vec{\alpha}$.

We show that the center of mass of $(\og, \op)$ remains almost unchanged during the process. For
$p=1,2,3$ we have the components of $\overline{\bf C}$ defined by
\begin{align*}
	\overline{\C}^p&=\frac{1}{16\pi \overline{E} }\lim_{\rho \rightarrow \infty}  \int_{r = \rho} \left[ x^p \sum_{i,j}(\og_{ij,i} - \og_{ii,j}) \frac{x^j}{r} - \sum_i \left( \og_{ip}\frac{x^i}{r} - \og_{ii} \frac{x^p}{r}\right) \right]\, d\sigma_0.
\end{align*}
By the divergence theorem,
\begin{align}
	16\pi \overline{E}\, \overline{\C}^p=&\lim_{\rho \rightarrow \infty} \int_{\rho_0 \le r \le \rho} x^p \sum_{i,j} (\og_{ij,ij} - \og_{ii,jj}) \, dx \notag \\
	&+ \int_{r = \rho_0} \left[ x^p \sum_{i,j}(\og_{ij,i} - \og_{ii,j}) \frac{x^j}{r} - \sum_i \left( \og_{ip}\frac{x^i}{r} - \og_{ii} \frac{x^p}{r}\right) \right]\, d\sigma_0. \label{eq:ctmog}
\end{align}
Because $\sum_{i,j} (\og_{ij,ij} - \og_{ii,jj})$ is the leading order term of the scalar curvature, it can be replaced by the lower order terms such as $|\op|^2$ and $(D\og)^2$ using the constraint equations. Then by \eqref{eq:gasym2}, \eqref{eq:piasym2}, and the symmetry $\sigma^k(x) = \sigma^k(-x)$, the interior integral above is $O(\rho_0^{1-2q})$. Similarly, we have
\begin{align}
	16\pi E \C^p&= \int_{r = \rho_0} \left[ x^p \sum_{i,j}(g_{ij,i} - g_{ii,j}) \frac{x^j}{r} - \sum_i \left( g_{ip}\frac{x^i}{r} - g_{ii} \frac{x^p}{r}\right) \right]\, d\sigma_0 + O(\rho_0^{-1}). \label{eq:ctm}
\end{align}
Moreover, because $\og = (u^k)^4 g$ and $u^k$ is close to $1$ on $\{ r = \rho_0\}$ for $k$ large enough, the difference of the boundary integrals on $\{ r = \rho_0 \}$ is $O(\rho_0^{-1})$. Therefore, for a fixed $\rho_0$ large and for $k\gg \rho_0$ large enough, we have
\[
	|\overline{{\C}} - {\bf C}| \le \epsilon.
\]
\qed
\end{pfam}


\section{Specifying the center of mass}
As in the previous section we assume that $g = v^4\delta $ outside a compact set and $\pi=O(r^{-2})$ and
$\pi(x)+\pi(-x)=O(r^{-1-2q})$. We apply Proposition \ref{prop:local} with $\tau=0$ and $\sigma$ chosen by
Theorem \ref{thm:local_cm} to be a solution of $L\sigma=0$ satisfying the  moment condition \eqref{eq:cm}.
\begin{pfctm}
By Proposition \ref{prop:local}, $|\overline{E}^k - E| \le \epsilon$ for $k$ large. Since $\tau=0$,
the proof that the angular momentum satisfies $|\overline{{\bf J}}-{\bf J}|\le\epsilon$ follows as in
the previous section.

To estimate the change in the center of mass we use \eqref{eq:ctmog}, \eqref{eq:ctm}, and  the argument following them. For $p=1,2,3$ we have
\[
	16\pi \left( \overline{E}\, \overline{\C}^p - E \C^p \right)= \lim_{\rho \rightarrow \infty} \int_{ \rho_0\le r \le \rho} x^p \sum_{i,j} (\og_{ij,ij} - \og_{ii,jj}) \, dx + O(\rho_0^{-1}).
\]
Because $\sigma^k(x) \neq \sigma^k(-x)$, the interior term above is not of lower order. Let
$\overline{R}$ denote the scalar curvature of $\og$. Then from the constraint equations, we have
$\overline{R}=O(r^{-4})$ and $\overline{R}(x) - \overline{R}(-x)  = O(r^{-3-2q})$. By \cite[Lemma 3.5]{H10},
\begin{align*}
	&\int_{\rho_0 \le r \le \rho} x^p \sum_{i,j} (\og_{ij,ij} - \og_{ii,jj}) \, dx = \int_{\rho_0 \le r \le \rho} x^p \overline{R} \, dx  - \int_{\rho_0 \le r \le \rho} x^p \mathcal{E}_{\og}\, dx + O( \rho_0^{-1}),
\end{align*}
where
\begin{align*}
	\mathcal{E}_{\og} =&-\sum_{i,j,l} (\og_{il} -\delta_{il}) (2 \og_{ij,lj} - \og_{il,jj} - \og_{jj,li}) \\
	&+\sum_{i,j,l} \big[- \og_{jl,j} \og_{il,i} + \og_{jl,j} \og_{ii,l} + \frac{3}{4} \og_{ij,l} \og_{ij,l} - \frac{1}{4} \og_{jj,l} \og_{ii,l} - \frac{1}{2} \og_{ij,l} \og_{il,j}\big].
\end{align*}
Recall that $\sigma^k \in C_0^{\infty}(A_k)$ is trace-free and divergence-free. By \eqref{eq:gasym2}, the above integral is equal to the following, up to an error term of order $O(\rho_0^{1-2q})$,
\begin{align*}
		&\int_{A_k} x^p \left[ \sum_{i,l} -2 \sigma^k_{il} \left( \frac{2E + A^k}{r} \right)_{,li} - \sum_{i,l,j} \sigma^k_{il} \sigma^k_{il,jj}\right] \, dx \\
		&-\int_{A_k} \frac{3}{4} x^p \sum_{i,j,l} \left[ \left(\frac{ 2E+A^k }{r}\right)_{,l} \delta_{ij} + \sigma^k_{ij,l} \right]^2\, dx \\
		& + \int_{A_k} \frac{1}{2} x^p \sum_{i,j,l}\left[\left( \frac{ 2E+A^k}{r} \right)_{,l} \delta_{ij} + \sigma^k_{ij,l} \right] \left[ \left( \frac{ 2E+A^k}{r} \right)_{,j} \delta_{il} + \sigma^k_{il,j} \right] \, dx\\
		&= \int_{A_k} x^p \sum_{i,l,j}  \left[ - \sigma^k_{il} \sigma^k_{il,jj}- \frac{3}{4} \sigma^k_{ij,l} \sigma^k_{ij,l} \right]\, dx =   \int_{A_1} x^p\sum_{i,j,l} \frac{1}{4} (\sigma_{ij,l})^2\, dx,
\end{align*}
where in the last line we use integration by parts.  We may then apply Theorem \ref{thm:local_cm} with $\vec{\beta}=64\pi E\vec{\gamma}$ to
obtain the required condition (\ref{eq:ctmineq}) on the center of mass.
\qed
\end{pfctm}


\section{Proof of Theorem \ref{thm:specified}}
In Theorem \ref{thm:am} and Theorem  \ref{thm:ctm} we assumed that $g = v^4 \delta$ outside a compact set and $\pi=O(r^{-2})$, $\pi(x)+\pi(-x)=O(r^{-1-2q})$. Using a density theorem \cite{H}, we prove below that the condition can be replaced by the weaker
Regge--Teitelboim condition.

In this section, we fix the constant $p>3$ and the constant $q\in (1/2,1)$.

\begin{thm} \label{thm:tilde}
Let $(g,\pi)$ be a nontrivial vacuum initial data set satisfying the Regge--Teitelboim condition. Given $\vec{\alpha}, \vec{\gamma} \in \mathbb{R}^3$ and given $\epsilon>0$,  there exists a vacuum initial data set $(\tilde{g}, \tilde{\pi})$ with $\|\tilde{g} -g\|_{W_{-q}^{2,p}} \le \epsilon, \| \tilde{\pi} - \pi \|_{W_{-1-q}^{1,p}} \le \epsilon$, so that
\begin{align} \label{ineq:mp}
		| \tilde{E} - E | \le \epsilon,\quad | \tilde{ \P } - \P | \le \epsilon,
\end{align}
and
\begin{align} \label{ineq:ac}
		 | \tilde{ \J } - \J - \vec{\alpha} | \le \epsilon, \quad  | \tilde{\C} - \C - \vec{\gamma} | \le \epsilon.
\end{align}
\end{thm}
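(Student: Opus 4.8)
The plan is to reduce the general Regge--Teitelboim case to the special case $g = v^4\delta$ outside a compact set that was treated in Theorems~\ref{thm:am} and~\ref{thm:ctm}, using the density theorem of \cite{H}. First I would invoke that density theorem: given the nontrivial vacuum data $(g,\pi)$ satisfying the Regge--Teitelboim condition and any $\epsilon>0$, there is a vacuum initial data set $(g',\pi')$ with $\|g'-g\|_{W^{2,p}_{-q}} \le \epsilon/2$ and $\|\pi'-\pi\|_{W^{1,p}_{-1-q}}\le\epsilon/2$ such that $g' = (v')^4\delta$ outside a compact set and $\pi'(x)+\pi'(-x) = O(r^{-1-2q})$; moreover the density theorem can be taken to preserve the energy-momentum, or at worst change $E,\P$ by at most $\epsilon/2$, and likewise $\J,\C$ by at most $\epsilon/2$ (here one uses that $\J$ and $\C$ depend continuously on the data in the relevant weighted norms, together with the Regge--Teitelboim condition which guarantees these quantities are well defined). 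Since $(g,\pi)$ is nontrivial we may also arrange that $(g',\pi')$ is nontrivial.

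Next I would apply Theorem~\ref{thm:am} to $(g',\pi')$ with the prescribed vector $\vec\alpha$ and with error tolerance $\epsilon/4$: this produces a vacuum data set $(\og,\op)$, with $g' = (v')^4\delta$ outside a compact set, within $\epsilon/4$ of $(g',\pi')$ in $W^{2,p}_{-q}\times W^{1,p}_{-1-q}$, satisfying $|\overline E - E'|, |\overline\C - \C'|, |\overline\P - \P'| \le \epsilon/4$ and $|\overline\J - \J' - \vec\alpha|\le\epsilon/4$. The construction of Theorem~\ref{thm:am} keeps $g = (v'')^4\delta$ outside a compact set (the final metric is a conformal factor times $g'+\sigma^k$, and $\sigma^k$ is compactly supported), so the hypotheses for the center-of-mass construction still hold; hence I would then apply Theorem~\ref{thm:ctm} to $(\og,\op)$ with prescribed vector $\vec\gamma$ and tolerance $\epsilon/4$, obtaining $(\tilde g,\tilde\pi)$ within $\epsilon/4$ of $(\og,\op)$, with $|\tilde E - \overline E|, |\tilde\J - \overline\J|, |\tilde\P - \overline\P| \le \epsilon/4$ and $|\tilde\C - \overline\C - \vec\gamma|\le\epsilon/4$. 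Chaining the triangle inequality through the three steps gives $\|\tilde g - g\|_{W^{2,p}_{-q}}\le\epsilon$, $\|\tilde\pi-\pi\|_{W^{1,p}_{-1-q}}\le\epsilon$, together with \eqref{ineq:mp} and \eqref{ineq:ac}, after relabeling the tolerances (take $\epsilon/8$ in each application to be safe about how the errors accumulate).

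Finally, to get the \emph{sharp} conclusion of Theorem~\ref{thm:specified}, where $\overline E = E$, $\overline\P = \P$ exactly and $\overline\J = \J + \vec\alpha_0$, $\overline\C = \C+\vec\gamma_0$ exactly, I would run the above construction continuously over a family. Embed $(g,\pi)$ in a smooth family of vacuum initial data and use the fact that $E$ and $\P$ are locally constant along suitable deformations while $(\J,\C)$ can be moved; more precisely, consider the map sending a perturbation parameter to the tuple $(\overline E - E, \overline\P - \P, \overline\J - \J - \vec\alpha_0, \overline\C - \C - \vec\gamma_0)\in\R^{8}$, show it is continuous and can be made to hit values arbitrarily close to $0$ in the last six slots while the first two are controllable, and then apply a degree-theoretic or fixed-point argument to land exactly at the origin. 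I expect \textbf{this last step to be the main obstacle}: one must show the relevant finite-dimensional map is continuous in the data and that the energy-momentum constraint ($\overline E = E$, $\overline\P = \P$ exactly, not just approximately) can be imposed simultaneously with hitting the angular-momentum and center-of-mass targets exactly — this requires either an implicit function theorem argument on a carefully chosen slice of deformations, or a separate argument that the construction of Sections~3--4 can be performed with energy-momentum held fixed rather than merely perturbed slightly, presumably by combining it with a compensating deformation drawn from the kernel of the linearized energy-momentum functional.
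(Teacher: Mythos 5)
Your proof of Theorem \ref{thm:tilde} is essentially the paper's: first the density theorem of \cite{H} to reduce to data with $g=v^4\delta$ outside a compact set and asymptotically odd $\pi$, then the construction of Theorem \ref{thm:am} followed by that of Theorem \ref{thm:ctm} (correctly observing that conformal flatness outside a compact set is preserved by the first step), with the triangle inequality splitting the tolerance. Your final paragraph addresses the exact statement of Theorem \ref{thm:specified} rather than the statement at hand; for the record, the paper resolves that separately by rescaling and boosting to fix $(E,\P)$ exactly (Proposition \ref{prop:same_mp}) and then the degree argument of Lemma \ref{lemma:degree}, which is close to what you sketch.
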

\begin{pf}

By the density theorem in \cite{H}, given the vacuum initial data set $(g,\pi)$ satisfying the Regge--Teitelboim condition and any $\epsilon >0$, there exists a vacuum initial data $(\check{g}, \check{\pi})$ with $\check{g} = v^4 \delta$ outside a compact set and $\check{\pi}(x) = O(r^{-2}), \check{\pi}(x) + \check{\pi}(-x) = O(r^{-1-2q})$, so that $\|\check{g} -g\|_{W_{-q}^{2,p}} \le \epsilon, \| \check{\pi} - \pi \|_{W_{-1-q}^{1,p}} \le \epsilon$. Moreover,
\[
	| \check{E} - E |\le \frac{\epsilon}{3},\;  |\check{\P} - \P | \le \frac{\epsilon}{3}, \; | \check{\J} - \J | \le \frac{\epsilon}{3}, \; |\check{\C} - \C| \le \frac{\epsilon}{3}.
\]
Then we apply the construction in the proof of Theorem \ref{thm:am} to $(\check{g},\check{\pi})$. Given $\vec{\alpha} = (\alpha_1, \alpha_2, \alpha_3) \in \mathbb{R}^3$, there exist symmetric $(0,2)$ tensors $\sigma, \tau \in C^{\infty}_0(A_1)$ satisfying
\begin{align} \label{int:alpha}
	\int_{A_1}  \left[\frac{1}{2} \tau_{ij,l}(Y_p)^l +  \tau_{il} (Y_p)^l_{,j}\right] \sigma^{ij}\, d x = -8\pi \alpha_p,
\end{align}
where rotation vector fields $Y_p = \frac{\partial }{\partial x^p} \times \vec{x}$. Let $\sigma^k (x) = k^{-1}\sigma(x/k)$ and $\tau^k(x) = k^{-2} \sigma(x/k)$. By Proposition \ref{prop:local}, there exists a large integer $k$ so that $(\hat{g}^k, \hat{\pi}^k)$:
\begin{align*}
	&\hat{g}^k = (u^k)^4 (\check{g} + \sigma^k) + h^k,\\
	 &\hat{\pi}^k = (u^k)^2 (\check{\pi} + \tau^k + \mathcal{L}_{(g+ \sigma^k)} {\bf X}^k) + w^k
\end{align*}
satisfies the vacuum constraint equations, where $(u^k, {\bf X}^k)$ and $(h^k, w^k)$ arise from solving the linearized constraint equations. They satisfy the decay condition \eqref{eq:decay}, and  the $(h^k, w^k)$ have compact support. Then, from Theorem \ref{thm:am}, we have
\[
		|\hat{E} - \check{E}| \le \frac{\epsilon}{3},\;\quad | \hat{\P} - \check{\P}| \le \frac{\epsilon}{3}, \; \quad| \hat{\C} - \check{\C} | \le \frac{\epsilon}{3},
\]
and
\[
		 | \hat{\J} - \check{\J} - \vec{\alpha}| \le \frac{\epsilon}{3}.
\]
We suppress the superscript $k$ of $(\hat{g}^k, \hat{\pi}^k)$ in the following. Clearly $(\hat{g}, \hat{\pi} )$ satisfies the condition in Theorem \ref{thm:ctm}, namely $\hat{g} = (uv)^4 \delta$ outside a compact set and $\hat{\pi} = O(r^{-2}), \hat{\pi}(x) +  \hat{\pi}(-x) = O(r^{-1-2q})$. Let $\hat{\sigma}$ be a symmetric $(0,2)$ tensor satisfying
\begin{align} \label{int:gamma}
			\int_{A_1} x^p \sum_{i,j,k}(\hat{\sigma}_{ij,k})^2\, d x = 64\pi \hat{E} \gamma_p.
\end{align}
Let $\hat{\sigma}^l (x) = l^{-1} \hat{\sigma}(x/l)$. By the construction in the proof of Theorem \ref{thm:ctm}, there exists an integer $l \gg k$ so that
\begin{align*}
	\tilde{g}^l &= (\hat{u}^l)^4(\hat{g} + \hat{\sigma}^{l}) + \hat{h}^l,\\
	\tilde{\pi}^l &= (\hat{u}^l)^2 (\hat{\pi} + \mathcal{L}_{(\hat{g} + \hat{\sigma}^{l})} \hat{\bf X}^l ) + \hat{w}^l
\end{align*}
satisfies the vacuum constraint equations, where $(\hat{u}^l, \hat{{\bf X}}^l)$ and $(\hat{h}^l, \hat{w}^l)$ are from solving the linearized constraint equations as above. Moreover,
\[
		|\tilde{E} - \hat{E}| \le \frac{\epsilon}{3},\quad | \tilde{\P} - \hat{\P} | \le \frac{\epsilon}{3}, \quad|\tilde{\J} - \hat{\J}| \le \frac{\epsilon}{3},
\]
and
\[
	 | \tilde{\C} - \hat{\C} - \vec{\gamma} | \le \frac{\epsilon}{3}.
\]
Then \eqref{ineq:mp} and \eqref{ineq:ac} follow by combining the above inequalities.
\qed
\end{pf}

We can further perturb $(\tilde{g}, \tilde{\pi})$ so that the energy-momentum vector equals to that of $(g,\pi)$, while changing the angular momentun and center of mass by only a small amount.

\begin{prop} \label{prop:same_mp}
Let $(g,\pi)$ be a nontrivial vacuum initial data set satisfying the Regge--Teitelboim condition. Given $\vec{\alpha}, \vec{\gamma} \in \mathbb{R}^3$ and $\epsilon >0$,  there exists a vacuum initial data $(\bg, \bp)$ satisfying  $\|\bg -g\|_{W_{-q}^{2,p}} \le \epsilon$ and  $\| \bp - \pi \|_{W_{-1-q}^{1,p}} \le \epsilon$ so that $\overline{E} = E$ and $\overline{ \P } = \P$ and
\[
		 | \overline{ \J } - \J - \vec{\alpha} | \le \epsilon, \quad \mbox{and} \quad  | \overline{\C} - \C - \vec{\gamma} | \le \epsilon.
\]
\end{prop}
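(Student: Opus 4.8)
The plan is to upgrade Theorem~\ref{thm:tilde} — which already supplies nearby vacuum data realizing the correct $\J+\vec\alpha$ and $\C+\vec\gamma$ up to any prescribed $\epsilon'>0$, but controls $E$ and $\P$ only to within $\epsilon'$ — by a topological degree argument over a four-parameter comparison family. First I would fix a continuous family $\{(g_b,\pi_b)\}_{b\in\overline B_\delta}$, $b=(b_0,b_1,b_2,b_3)\in\mathbb R^4$, of vacuum initial data sets satisfying the Regge--Teitelboim condition, parametrized by a small closed ball $\overline B_\delta\subset\mathbb R^4$, with $(g_0,\pi_0)=(g,\pi)$, with $\|g_b-g\|_{W^{2,p}_{-q}}+\|\pi_b-\pi\|_{W^{1,p}_{-1-q}}\to 0$ as $b\to 0$, with energy--momentum $(E(g_b,\pi_b),\P(g_b,\pi_b))=(E,\P)+b$ (after reparametrization), and — arranged exactly as the charge convergence \eqref{eq:masslm} is obtained in Proposition~\ref{prop:local} — with $|\J(g_b,\pi_b)-\J|+|\C(g_b,\pi_b)-\C|\to 0$ as $b\to 0$. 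Such a family is produced by a Corvino--Schoen-type deformation: glue, far from a fixed compact set, a small model piece (pieces of Schwarzschild and of boosted Schwarzschild, which are even and thus respect the Regge--Teitelboim condition) carrying the prescribed small energy--momentum, then solve the vacuum constraints as in \cite{CS06}. The existence of a genuinely four-dimensional such family near a \emph{nontrivial} $(g,\pi)$ is where positive mass rigidity enters: it forces $E>|\P|$, so there is room to move the energy--momentum vector in every direction.

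Next I would apply the construction of Theorem~\ref{thm:tilde}, with the \emph{fixed} target vectors $\vec\alpha,\vec\gamma$ and a parameter $\epsilon'$ to be chosen, to each member $(g_b,\pi_b)$, producing vacuum data $(\overline g_b,\overline\pi_b)$ with
\[
\begin{aligned}
&|\overline E_b-E-b_0|\le\epsilon',\qquad |\overline\P_b-\P-(b_1,b_2,b_3)|\le\epsilon',\\
&|\overline\J_b-\J(g_b,\pi_b)-\vec\alpha|\le\epsilon',\qquad |\overline\C_b-\C(g_b,\pi_b)-\vec\gamma|\le\epsilon'.
\end{aligned}
\]
The point needing care is uniformity in $b$: the integers $k$ and $l$ occurring in the proofs of Theorems~\ref{thm:am}, \ref{thm:ctm}, \ref{thm:tilde}, and the compactly supported linearized solutions $\sigma,\tau,\hat\sigma$ (normalized by $E(g_b,\pi_b)$, which varies continuously and is bounded below on $\overline B_\delta$), can be chosen uniformly over the compact set $\overline B_\delta$, so that the elliptic corrections $(u,\mathbf X,h,w)$ from \cite{CS06} depend continuously on the data; hence $b\mapsto(\overline g_b,\overline\pi_b)$, and every asymptotic charge of $(\overline g_b,\overline\pi_b)$, depends continuously on $b$. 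Define $\Theta:\overline B_\delta\to\mathbb R^4$ by $\Theta(b)=(\overline E_b-E,\ \overline\P_b-\P)$; then $\Theta$ is continuous and $|\Theta(b)-b|\le C\epsilon'$ on $\overline B_\delta$.

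Now I would run the degree argument. Choose $\delta$ so small that $\|g_b-g\|_{W^{2,p}_{-q}}+\|\pi_b-\pi\|_{W^{1,p}_{-1-q}}\le\epsilon/2$ and $|\J(g_b,\pi_b)-\J|+|\C(g_b,\pi_b)-\C|\le\epsilon/2$ for all $b\in\overline B_\delta$, and then choose $\epsilon'$ with $C\epsilon'<\min(\delta,\epsilon/2)$. Since $|\Theta(b)-b|\le C\epsilon'<\delta=|b|$ on $\partial B_\delta$, the straight-line homotopy $(b,s)\mapsto(1-s)b+s\Theta(b)$ never vanishes on $\partial B_\delta$, so $\deg(\Theta,B_\delta,0)=\deg(\mathrm{id},B_\delta,0)=1$ and there is a $b^*\in B_\delta$ with $\Theta(b^*)=0$, i.e. $\overline E_{b^*}=E$ and $\overline\P_{b^*}=\P$. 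Put $(\bg,\bp):=(\overline g_{b^*},\overline\pi_{b^*})$. The triangle inequality gives the required $\epsilon$-closeness to $(g,\pi)$; and since $|\overline\J-\J(g_{b^*},\pi_{b^*})-\vec\alpha|\le\epsilon'$ and $|\J(g_{b^*},\pi_{b^*})-\J|\le\epsilon/2$, we get $|\overline\J-\J-\vec\alpha|\le\epsilon$, and likewise $|\overline\C-\C-\vec\gamma|\le\epsilon$. This is the assertion.

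I expect the real difficulty to be the construction of the comparison family in the first step: one needs its energy--momentum map to cover a full neighborhood of $(E,\P)$ while (i) every member stays close to $(g,\pi)$ in $W^{2,p}_{-q}\times W^{1,p}_{-1-q}$, (ii) every member satisfies the Regge--Teitelboim hypothesis required to run Theorem~\ref{thm:tilde}, and (iii) the angular momentum and center of mass of $(g_b,\pi_b)$ stay controlled as $b\to 0$ — in particular the orbital $\vec c\times\vec p$ contribution of any far-away model piece must be kept small, which is why symmetric placement (or adjustment within a compact region) is used rather than a naive far translation. Once this family is available, making the Theorem~\ref{thm:tilde} construction continuous in $b$ is routine bookkeeping, and the degree-theoretic conclusion is soft.
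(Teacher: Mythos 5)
Your degree-theoretic superstructure is sound in outline (the paper itself runs exactly such an argument, via Lemma \ref{lemma:degree}, to fix $\J$ and $\C$ in the proof of Theorem \ref{thm:specified}), but there is a genuine gap at the crux: the existence of the four-parameter comparison family $\{(g_b,\pi_b)\}$ whose energy--momentum map covers a full neighborhood of $(E,\P)$. The mechanism you propose --- gluing a small, far-away (boosted) Schwarzschild piece ``carrying the prescribed small energy--momentum'' --- cannot produce it. Any such piece that is itself admissible data contributes a future-causal increment, so the achievable $b$ are confined to the forward cone $\{\,b_0\ge |(b_1,b_2,b_3)|\,\}$; in particular you cannot decrease the energy, the image of $b\mapsto (E(g_b,\pi_b),\P(g_b,\pi_b))$ misses an entire half-space near the origin, and $\Theta$ is not even defined on a ball $\overline{B}_\delta$ about $0$, so the homotopy to the identity never starts. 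The secondary issues you flag --- suppressing the orbital contributions $m\vec c$ and $\vec c\times\vec p$ of a far-away piece to keep $\C(g_b,\pi_b)$ and $\J(g_b,\pi_b)$ controlled, and keeping the glued data close in $W^{2,p}_{-q}\times W^{1,p}_{-1-q}$ --- are also substantial; they are essentially the content of the $N$-body construction of \cite{CCI}, not routine bookkeeping.

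The correct way to ``move the energy--momentum in every direction,'' which you rightly sense must come from $E>|\P|$, is not gluing but the action of scalings and boosts on the charges --- and once that is in hand the degree argument is unnecessary. This is the paper's proof: starting from the $(\tilde g,\tilde\pi)$ of Theorem \ref{thm:tilde}, set $(\bg,\bp)=(\lambda^2\tilde g,\lambda\tilde\pi)$ with $\lambda^2=(E^2-|\P|^2)/(\tilde E^2-|\tilde\P|^2)$, which multiplies all four charges by $\lambda$ (close to $1$ precisely because $E>|\P|$ by the positive mass theorem) and matches the rest mass exactly; then a small boost of the slice (existence from \cite{CO}) achieves $\overline E=E$ and hence $|\overline\P|=|\P|$, and a rotation gives $\overline\P=\P$; finally $\J$ and $\C$ change only slightly under these small Poincar\'e transformations (\cite{CD}). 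If you wish to keep your route, the comparison family should be built by this same scaling-plus-boosting --- at which point the degree step is solving exactly the problem that the direct argument has already solved.
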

\begin{pf}
Let $(\tilde{g}, \tilde{\pi})$ be the initial data constructed in Theorem \ref{thm:tilde}. Let $(\bar{g}, \bar{\pi})$ be the vacuum initial data from scaling $\bg = \lambda^2 \tilde{g}$ and $\bp = \lambda \tilde{\pi}$, where the constant $\lambda$ is a positive constant and $\lambda^2 =(E^2 - | \P |^2)/ (\tilde{E}^2 - | \tilde{\P}|^2)$. Then, by straightforward computations, we have $(\overline{E}, \overline{\P}, \overline{\J}, \overline{\C})= \lambda (\tilde{E}, \tilde{\P}, \tilde{\J}, \tilde{\C})$ and then
\[
		\overline{E}^2 - | \overline{ \P }|^2 = E^2 - | \P|^2.
\]
Notice that by \eqref{ineq:mp},
\[
		 \Big| (\tilde{E}^2 - |\tilde{\P}|^2) -  (E^2 - | \P|^2 )  \Big|\le  2\epsilon (E + |\P|) + \epsilon^2.
\]
Therefore, since $E > |\P|$ by the positive mass theorem,  we divide the above inequality by $E^2 - | \P|^2$. Then
\[
	|\lambda^{-2}-1| \le \frac{2\epsilon }{E -| \P| } + \frac{\epsilon^2}{E^2 - | \P |^2}.
\]
Because $E$ and $\P$ are fixed, we can choose $k,l$ in the proof of Theorem \ref{thm:tilde} large enough so that $\lambda$ is close to $1$. Therefore, $(\overline{E}, \overline{\P}, \overline{\J}, \overline{\C})$ is close to $(\tilde{E}, \tilde{\P}, \tilde{\J}, \tilde{\C})$ and hence to $(E,\P,\J,\C)$. 
Because $\overline{E}$ is close to $E$, we then boost the data $(\bg, \bp)$ by a small angle so that $\overline{E} = E$, and then $|\overline{\P}| = | \P|$ (the existence of such boosted slice is proven in \cite{CO}). By rotating the asymptotically flat coordinates, we can make $\overline{\P} = \P$. Also, notice that the angular momentum and center of mass only change a small amount after these transformations.  (The transformation formulas of these quantities under the Poincar\'{e} transformations can be found in, for example, \cite[Appendix E]{CD}.)
\qed
\end{pf}

To prove Theorem \ref{thm:specified}, we need the following degree argument.
\begin{lemma} \label{lemma:degree}
Fix the constant $a>0$. Let $B_{a}(z_0) \subset \mathbb{R}^n$ denote the closed ball centered at $z_0$ with radius $a$. Let $f: B_{a}(z_0) \rightarrow \mathbb{R}^n$ be a continuous map satisfying, for any $z \in B_{a}(z_0)$,
\[
	|f(z) - z | \le a.
\]
Then $f^{-1}(z_0)$ is non-empty. More precisely,  either $f(z) = z_0$ for some $z\in \partial B_a(z_0)$ or the degree of $f$ at $z_0$ is one.
\end{lemma}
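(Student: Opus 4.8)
The plan is to prove Lemma \ref{lemma:degree} by a standard Brouwer degree argument, showing that the hypothesis $|f(z)-z_0|\le a$ on the boundary (which follows from the hypothesis on all of $B_a(z_0)$) forces $f$ to be homotopic, rel the constraint of avoiding $z_0$, to the identity map, so that $\deg(f,B_a(z_0),z_0)=\deg(\mathrm{id},B_a(z_0),z_0)=1$. First I would reduce to the case $z_0=0$ and $a=1$ by translating and rescaling: replace $f$ by $\tilde f(w)=a^{-1}\big(f(z_0+aw)-z_0\big)$ on the closed unit ball, so the hypothesis becomes $|\tilde f(w)-w|\le 1$ for all $w\in \overline{B_1(0)}$, and it suffices to show $\tilde f^{-1}(0)\ne\emptyset$ with degree one at $0$ in the nondegenerate case.

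Next I would handle the boundary behavior. If $\tilde f(w)=0$ for some $w\in\partial B_1(0)$ we are in the first alternative of the statement and done, so assume $\tilde f$ does not vanish on $\partial B_1(0)$; then the degree $\deg(\tilde f,B_1(0),0)$ is well defined. Consider the straight-line homotopy $H(w,t)=(1-t)\tilde f(w)+t\,w$ for $t\in[0,1]$, interpolating from $\tilde f$ to the identity. The key step is to check that $H(w,t)\ne 0$ for $w\in\partial B_1(0)$ and all $t\in[0,1]$: for $|w|=1$ we have, using the hypothesis, $|H(w,t)-w| = (1-t)|\tilde f(w)-w| \le (1-t)\le 1 = |w|$, so $H(w,t)$ lies in the closed ball of radius $|w|$ around $w$, hence $H(w,t)=0$ would force equality throughout, giving $(1-t)|\tilde f(w)-w| = |w|=1$ and $t=0$ together with $\tilde f(w)$ antipodal-type aligned with $w$; but at $t=0$, $H(w,0)=\tilde f(w)\ne 0$ by assumption. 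Thus $H$ never hits $0$ on the boundary, so by homotopy invariance of the degree, $\deg(\tilde f,B_1(0),0)=\deg(\mathrm{id},B_1(0),0)=1$. Since a map with nonzero degree at a point is surjective onto a neighborhood of that point — in particular $0\in \tilde f(B_1(0))$ — we get $\tilde f^{-1}(0)\ne\emptyset$, and unwinding the reduction gives $f^{-1}(z_0)\ne\emptyset$ with degree one when $f$ does not vanish on $\partial B_a(z_0)$.

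The only mildly delicate point — which I expect to be the main obstacle, though it is quite minor — is making the boundary-nonvanishing estimate for the homotopy airtight at the endpoint $t=0$ and at points where $|\tilde f(w)-w|=1$ exactly; one must observe that equality in $|H(w,t)-w|\le |w|$ can only occur when $H(w,t)$ is the point of the sphere $\partial B_{|w|}(w)$ diametrically opposite... more carefully, $0$ lies on the sphere $\{y : |y-w| = |w|\}$, so $H(w,t)=0$ is compatible with the inequality only in the borderline case $(1-t)|\tilde f(w)-w|=1$, and then $H(w,t)=(1-t)\tilde f(w)+tw=0$ forces $\tilde f(w) = -\tfrac{t}{1-t}w$, so $|\tilde f(w)-w| = \tfrac{1}{1-t}$, consistent; but then $\tilde f(w)$ is a negative multiple of $w$, and plugging back, $0 = (1-t)\tilde f(w) + tw$ with $\tilde f(w) = -\tfrac{t}{1-t} w$ is an identity, so this case is not excluded by the inequality alone — it is excluded precisely because such a $w$ would satisfy $\tilde f(w) = -\tfrac{t}{1-t}w \ne 0$ unless $t=0$, and if $t=0$ then $\tilde f(w)=0$, contradicting our standing assumption. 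So in all cases $H(w,t)\ne 0$ on $\partial B_1(0)\times[0,1]$, and the argument closes. This is exactly the content one needs, and I would present it by citing homotopy invariance and the normalization $\deg(\mathrm{id},\Omega,z_0)=1$ for $z_0\in\Omega$ from standard degree theory (e.g. Nirenberg or Deimling).
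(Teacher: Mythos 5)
Your proof is correct and follows essentially the same route as the paper: reduce to the unit ball, use the straight-line homotopy between $f$ and the identity, verify nonvanishing on the boundary (the paper does this via the inner product $(h(z,t)-z_0)\cdot(z-z_0)\ge 1-t$, you via the norm estimate $|H(w,t)-w|\le 1-t$ and an equality analysis, which amount to the same thing), and conclude by homotopy invariance of the degree. No gaps.
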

\begin{pf}
By scaling, we only need to prove the case when $a=1$. We define the continuous homotopy between $f$ and the identity map for $0 \le t \le 1$:
\[
	h(z,t) = (1-t) z + t f(z) .
\]
For a boundary point $z\in \partial B_1 (z_0)$,
\begin{align*}
		(h(z,t) - z_0 )\cdot (z-z_0) &= \left[ (z-z_0) + t (f(z) - z) \right] \cdot (z -z_0) \\
		&\ge |z-z_0|^2 - t | f(z) - z| |z-z_0| \ge 1-t.
\end{align*}
Then either $f(z) = z_0$ for some $z\in \partial B_1(z_0)$ or $h(z,t) \ne z_0 $ for all $0\le t \le 1$ and for all $z \in {\partial B_1(z_0)} $. In particular, the latter case implies that $z_0$ stays in the range of $h(\cdot, t)$ for all $t\in [0,1]$. Therefore, $f^{-1}(z_0)$ is non-empty.
\qed
\end{pf}

\begin{pfsp}
Denote the given constant vector $(\vec{\alpha}_0, \vec{ \gamma}_0)$ by $z_0 \in \mathbb{R}^6$. We may without loss of generality prove only for the case  $|\vec{\alpha}_0|\neq 0$ and $|\vec{\gamma}_0| \neq 0$, for if $\vec{\alpha}_0$ (or $\vec{\gamma}_0$) is the zero vector, we apply the theorem twice to a non-zero constant vector $\vec{v}$ and then to $-\vec{v}$. We define the map $f: B_{\epsilon} (z_0) \subset \mathbb{R}^6 \rightarrow \mathbb{R}^6$ by
\[
	f(\vec{\alpha}, \vec{\gamma}) = (\overline{ \J} - {\bf J}, \overline{\C} - {\bf C}),
\]
where $\overline{\J}$ and $\overline{\C}$ are the angular momentum and center of mass of $(\bg, \bp)$ constructed in Proposition \ref{prop:same_mp}. By the construction,
\[
	|f(z) - z| \le \epsilon.
\]
Once we verify that $f$ is continuous, we apply Lemma \ref{lemma:degree} to obtain $f(\vec{\alpha}, \vec{\gamma}) = (\vec{\alpha}_0, \vec{\gamma}_0)$ for some $(\vec{\alpha}, \vec{\gamma})$ and  complete the proof of Theorem \ref{thm:specified}.

\begin{claim}
The map $f$ is continuous.
\end{claim}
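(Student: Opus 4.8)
The plan is to trace through every choice made in the proofs of Theorem \ref{thm:tilde} and Proposition \ref{prop:same_mp} and verify that, once the few discrete parameters are fixed, each ingredient of the construction depends continuously on $(\vec\alpha,\vec\gamma)$. The construction factors into: (i) the density-theorem data $(\check{g},\check{\pi})$, which is independent of $(\vec\alpha,\vec\gamma)$; (ii) the compactly supported linearized solutions $(\sigma,\tau)$ of Theorem \ref{thm:localpert} with moment $-8\pi\vec\alpha$; (iii) the integer $k$ and the correction $(u^k,{\bf X}^k,h^k,w^k)$ of \cite{CS06} producing $(\hat{g},\hat{\pi})$; (iv) the linearized solution $\hat\sigma$ of Theorem \ref{thm:local_cm} with moment $64\pi\hat{E}\vec\gamma$; (v) the integer $l$ and its correction producing $(\tilde{g},\tilde{\pi})$; and (vi) the rescaling by $\lambda$ together with the boost and rotation of Proposition \ref{prop:same_mp} producing $(\bg,\bp)$. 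Continuity is a local statement, so it suffices to work near an arbitrary point $(\vec\alpha_*,\vec\gamma_*)\in B_\epsilon(z_0)$; since the reduction at the start of the proof of Theorem \ref{thm:specified} permits us to assume $|\vec\alpha_0|,|\vec\gamma_0|\neq 0$, and since the degree argument is unaffected by replacing $\epsilon$ with a smaller value, we may also assume $\epsilon<\min(|\vec\alpha_0|,|\vec\gamma_0|)$, so that $\vec\alpha$ and $\vec\gamma$ stay bounded away from $0$ on $B_\epsilon(z_0)$.

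I would first dispose of the explicit steps (ii) and (iv). In Theorem \ref{thm:localpert} the pair realizing the moment $\vec\lambda=-8\pi\vec\alpha$ is obtained from a fixed base pair $(\sigma_0,\tau_0)$ with $T_{(\sigma_0,\tau_0)}\neq 0$ by multiplying $\tau_0$ by a scalar depending continuously on $|\vec\alpha|$ and pulling back by a rotation carrying a fixed direction to $\vec\alpha/|\vec\alpha|$. A continuous choice of such rotations over $B_\epsilon(z_0)$ exists because, with $\epsilon<|\vec\alpha_0|$, the map $\vec\alpha\mapsto\vec\alpha/|\vec\alpha|$ carries $B_\epsilon(z_0)$ into an open hemisphere of $S^2$, over which the bundle $SO(3)\to S^2$ is trivial. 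Hence $(\vec\alpha,\vec\gamma)\mapsto(\sigma^k,\tau^k)$ is continuous into $C^\infty_0(A_k)$, and a fortiori into $W^{2,p}_{-q}\times W^{1,p}_{-1-q}$. The same argument applies to $\hat\sigma$ in step (iv): its prescribed moment $64\pi\hat{E}\vec\gamma$ is a continuous function of $(\hat{E},\vec\gamma)$, nonzero on the relevant set since $\hat{E}>0$ by the positive mass theorem, so $(\hat{E},\vec\gamma)\mapsto\hat\sigma$ is continuous.

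Next I would handle the implicit steps (iii), (v), and (vi). The key observation is that one may fix $k$ (and then $l$) uniformly over $B_\epsilon(z_0)$: as $(\vec\alpha,\vec\gamma)$ ranges over the compact ball, the tensors $(\sigma^k,\tau^k)$ range over a bounded family in $C^\infty_0(A_k)$, so the constraint violation $\Phi(\hat{g},\hat{\pi})$ is uniformly small and the solvability in Proposition \ref{prop:local} holds with a single $k$; likewise for $l$. With $k$ fixed, the correction $(u^k,{\bf X}^k,h^k,w^k)$ is produced by an implicit-function/contraction argument whose inputs depend continuously on $(\sigma^k,\tau^k)$ in the relevant norms, so $(\hat{g},\hat{\pi})$ depends continuously on $\vec\alpha$ in $W^{2,p}_{-q}\times W^{1,p}_{-1-q}$; since the ADM functionals $E,\P,\J,\C$ are continuous on this space for data obeying the Regge--Teitelboim condition (valid throughout with uniform decay), $(\hat{E},\hat\P,\hat\J,\hat\C)$ depends continuously on $\vec\alpha$. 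Iterating through step (v) gives continuous dependence of $(\tilde{g},\tilde{\pi})$ and of $(\tilde E,\tilde\P,\tilde\J,\tilde\C)$ on $(\vec\alpha,\vec\gamma)$. Finally, in step (vi) the factor $\lambda=\big((E^2-|\P|^2)/(\tilde E^2-|\tilde\P|^2)\big)^{1/2}$ is a continuous positive function of $(\tilde E,\tilde\P)$ — the denominator is positive by the positive mass theorem — and the boost and the rotation aligning $\overline\P$ with $\P$ are continuous and small; since $\J$ and $\C$ transform continuously under Poincar\'{e} transformations (cf. \cite[Appendix E]{CD}), it follows that $(\overline\J,\overline\C)$, and hence $f(\vec\alpha,\vec\gamma)=(\overline\J-\J,\overline\C-\C)$, depends continuously on $(\vec\alpha,\vec\gamma)$.

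The main obstacle is precisely the interaction of the discrete choices — the integers $k$ and $l$ — with the continuous parameter $(\vec\alpha,\vec\gamma)$: chosen pointwise they could destroy continuity, and the remedy is the uniformity argument above, namely that over the compact ball $B_\epsilon(z_0)$ the linearized building blocks stay in a bounded set, so the smallness estimates controlling solvability in Proposition \ref{prop:local} and \cite{CS06} hold with constants independent of $(\vec\alpha,\vec\gamma)$, and a single pair $(k,l)$ suffices. A secondary issue is that there is no global continuous section of $SO(3)\to S^2$, so the rotation in Theorem \ref{thm:localpert} cannot be chosen continuously in $\vec\alpha$ over all of $\R^3\setminus\{0\}$; this is exactly why one first reduces to $\vec\alpha_0,\vec\gamma_0\neq 0$ and shrinks $\epsilon$ so that $B_\epsilon(z_0)$ lies over an open hemisphere.
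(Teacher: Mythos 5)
Your proposal is correct and follows essentially the same route as the paper: fix the tensors $(\sigma,\tau,\hat\sigma)$ and the integers $k,l$ once for the whole ball, obtain the tensors for nearby $(\vec\alpha,\vec\gamma)$ by a continuously varying rotation and scaling (using $\vec\alpha_0,\vec\gamma_0\neq 0$ so the direction map avoids the antipodal obstruction), and observe that the remaining analytic steps (the Corvino--Schoen correction, the rescaling by $\lambda$, and the boost/rotation) depend continuously on the data. Your explicit remarks on the uniformity of $k,l$ over the compact ball and on the local triviality of $SO(3)\to S^2$ make precise what the paper handles implicitly via its pointwise estimate for $\sqrt{|\vec\alpha|/|\vec\alpha_0|}\,(R_1^{-1})^*\sigma-\sigma$.
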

\begin{pf}
Given non-zero vectors $(\vec{\alpha}_0, \vec{\gamma}_0)$, let $(\sigma, \tau,  \hat{\sigma})$ be the symmetric $(0,2)$ tensors and $k,l$ be the integers  in the proof of Theorem \ref{thm:tilde} for $(\vec{\alpha}_0, \vec{\gamma}_0)$. (Notice that $k,l$ may be chosen large depending only on $\vec{\alpha}_0, \vec{\gamma}_0$, and $\epsilon$.)

Assume that $(\vec{\alpha}, \vec{\gamma})$ is another pair of constant vectors. We fix the symmetric $(0,2)$ tensors $\sigma, \tau, \hat{\sigma}$ and the annular shells determined by $k$ and $l$. We choose the symmetric $(0,2)$ tensors for $\vec{\alpha}, \vec{\gamma}$ from $\sigma, \tau, \hat{\sigma}$ and apply the construction in Theorem \ref{thm:tilde} over the annular shells determined by $k$ and $l$.

That the choice of the symmetric $(0,2)$ tensors depends continuously on $\vec{\alpha}$ can be seen as follows: 
Let $R_1: \mathbb{R}^3 \rightarrow \mathbb{R}^3$ be a rotation so that $R_1 (\vec{\alpha}_0)$ is parallel to $\vec{\alpha} $. The symmetric $(0,2)$ tensors defined by
\[
	\sqrt{\frac{| \vec{\alpha}|}{ | \vec{\alpha}_0 |}}(R_1^{-1})^*\sigma,\quad \sqrt{\frac{| \vec{\alpha}|}{ | \vec{\alpha}_0 |}} (R_1^{-1})^* \tau
\]
 satisfy the corresponding condition \eqref{int:alpha} for $\vec{\alpha}$ on the right-hand side. It is easy to check that at each point
 \[
 	\bigg|\sqrt{\frac{| \vec{\alpha}|}{ | \vec{\alpha}_0 |}}(R_1^{-1})^*\sigma - \sigma \bigg| \le | \vec{\alpha} - \vec{\alpha}_0| \left( \frac{1}{2 | \vec{\alpha}_0 | } |\sigma|+ \frac{3}{2} | D\sigma| \right).
 \]
Similar estimate can be derived for the other tensor $\tau$. For $\vec{\gamma}$ and the integral \eqref{int:gamma}, we can also choose the tensor corresponding to $\hat{\sigma}$ in the same fashion. It is straightforward to check that the rest of the construction is continuous, and hence $f$ is continuous.


\qed
\end{pf}
\end{pfsp}

\begin{cor}
Given any constant vector $(E,\P,\J,\C) \in \mathbb{R}^{10}$ with $E > | \P|$, there exists a smooth and complete asymptotically flat vacuum initial data set whose energy, linear momentum, angular momentum, and center of mass are the corresponding components of this constant vector.
\end{cor}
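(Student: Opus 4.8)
The plan is to deduce the corollary from Theorem~\ref{thm:specified}, which keeps the energy-momentum of the seed data fixed while shifting the angular momentum and center of mass by any prescribed vectors; hence it suffices to exhibit a smooth, complete vacuum seed whose ADM energy-momentum already equals the target $(E,\P)$. Given $(E,\P,\J,\C)\in\mathbb{R}^{10}$ with $E>|\P|$, I would set $m:=\sqrt{E^2-|\P|^2}>0$ and pass to the Schwarzschild spacetime of mass $m$. If $\P=0$, take the time-symmetric slice $\bigl(\mathbb{R}^3\setminus\{0\},\,(1+\tfrac{m}{2r})^4\delta,\,0\bigr)$; if $\P\neq 0$, take instead a spacelike slice asymptotic to a hyperplane boosted in the direction of $\P$, chosen so that the ADM energy-momentum of the induced data is exactly $(E,\P)$. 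The existence of such boosted slices, with the decay and Regge--Teitelboim parity needed for the constructions of Sections~2--5 and for well-defined $\J$ and $\C$, is provided by \cite{CO} (and such slices, for Kerr, are computed explicitly in \cite{CD}). In either case one obtains a smooth, complete, asymptotically flat vacuum initial data set $(g_0,\pi_0)$ satisfying the Regge--Teitelboim condition, with energy $E$, linear momentum $\P$, and some angular momentum $\J_0$ and center of mass $\C_0$.

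Next I would apply Theorem~\ref{thm:specified} to $(g_0,\pi_0)$ with the constant vectors $\vec{\alpha}_0:=\J-\J_0$ and $\vec{\gamma}_0:=\C-\C_0$. This produces a vacuum initial data set $(\bg,\bp)$ in a small $W^{2,p}_{-q}\times W^{1,p}_{-1-q}$ neighborhood of $(g_0,\pi_0)$ with
\[
	\overline{E}=E,\qquad \overline{\P}=\P,\qquad \overline{\J}=\J_0+\vec{\alpha}_0=\J,\qquad \overline{\C}=\C_0+\vec{\gamma}_0=\C,
\]
so that all four asymptotic charges take exactly the prescribed values.

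Finally I would verify that $(\bg,\bp)$ is smooth and complete, which is the only point of substance beyond the two steps above. Smoothness follows by elliptic regularity: through Proposition~\ref{prop:local} and Theorem~\ref{thm:tilde}, $(\bg,\bp)$ is obtained from the smooth data $(g_0,\pi_0)$ by adding smooth compactly supported solutions of the linearized constraints, correcting by a tuple $(u,{\bf X},h,w)$ that solves an elliptic system with smooth coefficients and smooth right-hand side, a global homothetic rescaling, and a small Poincar\'{e} transformation; bootstrapping gives $(u,{\bf X},h,w)\in C^\infty$, hence $(\bg,\bp)\in C^\infty$. For completeness, the key observation is that every perturbation entering the construction is uniformly $C^0$-small for the large scaling parameters used --- $\sigma^k(x)=k^{-1}\sigma(x/k)\to 0$ in $C^0$, $u^k\to 1$, $h^k$ has fixed compact support with $\|h^k\|_{W^{2,p}_{-q}}\to 0$ and $W^{2,p}_{-q}\hookrightarrow C^0$ since $p>3$, and the homothety factor and boost angle can be taken arbitrarily close to $1$ and $0$ --- so that $\bg$ is a genuine Riemannian metric uniformly equivalent to the complete metric $g_0$, and is therefore complete. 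This yields the desired smooth, complete, asymptotically flat vacuum initial data with energy-momentum $(E,\P)$, angular momentum $\J$, and center of mass $\C$. The main obstacle, apart from invoking Theorem~\ref{thm:specified}, is securing the seed data realizing an arbitrary admissible $(E,\P)$ with controlled asymptotics --- precisely the role of the boosted Schwarzschild slices of \cite{CO} --- together with the bookkeeping that smoothness and completeness are not destroyed by the perturbation.
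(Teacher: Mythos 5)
Your reduction is the same as the paper's: by Theorem \ref{thm:specified} it suffices to exhibit one smooth, complete vacuum seed whose energy-momentum is exactly $(E,\P)$, and the rest is bookkeeping. Where you differ is in how that seed is produced. The paper starts from strongly asymptotically flat data close to the flat data, invokes the global stability of Minkowski space \cite{CK,KN,LR} to get a future- and past-complete development, and then boosts, rotates, and rescales; completeness, the one-ended topology $\mathbb{R}^3$, and the admissibility of boosts of arbitrary rapidity all come for free from the completeness of the development. You instead take Schwarzschild of mass $\sqrt{E^2-|\P|^2}$ and a boosted slice via \cite{CO}, and two points in that route need care. First, the Riemannian Schwarzschild slice has two asymptotically flat ends, while the paper's definition of asymptotic flatness (a single chart outside a compact set) presumes one; moreover the boost theorem of \cite{CO} is formulated for one-ended data on $\mathbb{R}^3$, so for Schwarzschild you would have to exhibit the boosted slice directly inside the Kruskal extension and check that it is complete there --- the exterior boosted slices of \cite{CD} mentioned in the introduction are precisely \emph{not} complete, so this is not automatic. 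Second, realizing $|\P|/E$ arbitrarily close to $1$ requires arbitrarily large boosts; the paper itself uses \cite{CO} only for a small boost (Proposition \ref{prop:same_mp}), so you should either quote the boost theorem in its full strength (boost domains of every opening angle $\theta<1$) or, more simply, follow the paper and take a near-flat seed with a complete development. Your closing verification that smoothness and completeness survive the perturbations of Sections 3--5 is correct and is a point the paper leaves implicit.
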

\begin{pf}
By Theorem \ref{thm:specified}, it suffices to show that there exists a vacuum initial data with the specified $E$ and $\P$. By the results of the global existence of the Cauchy problem (see \cite{CK, KN, LR}), given a strongly asymptotically flat vacuum initial data set $(g,\pi)$ close to the flat data, there exist future and past complete vacuum developments. In particular, we can boost the slice in spacetime and then rotate the coordinates so that the energy-momentum vector of $(g,\pi)$ is parallel to the given vector $(E,\P)$. Then, by scaling the data, we obtain a vacuum initial data with the desired energy-momentum vector $(E,\P)$.
\qed
\end{pf}

\bibliographystyle{plain}

\end{document}